\DeclareMathAlphabet{\mathpzc}{OT1}{pzc}{m}{it}
\theoremstyle{plain}
\newtheorem{Thm}{Theorem}[section]
\newtheorem{Prop}[Thm]{Proposition}
\newtheorem{Lem}[Thm]{Lemma}
\newtheorem{Coro}[Thm]{Corollary}
\theoremstyle{definition}
\numberwithin{equation}{section}
\newcommand{\etam}{\eta_m}
\newcommand{\etak}{\eta_k}
\newcommand{\Kbfj}{K^{\bfj}}
\def\ttv{{v}}
\newcommand{\afbfUslp}{\bfU(\widehat{\frak{sl}}_n)^+}
\newcommand{\afbfUslNp}{{\bf U}(\widehat{\frak{sl}}_N)^+}
\newcommand{\han}{\subseteq}
\newcommand{\lan}{\langle}
\newcommand{\ran}{\rangle}
\newcommand{\leb}{\left[}
\newcommand{\rib}{\right]}
\def\ggp#1#2{\left[\kern-3.2pt\left[{#1\atop #2}\right]\kern-3.2pt\right]}
\def\fS{{\frak S}}
\def\fb{{\frak b}}
\def\sfz{{\mathsf z}}
\newcommand{\g}{{\mathsf g}}
\newcommand{\sff}{{\mathsf f}}
\newcommand{\sfh}{{\mathsf h}}
 \newcommand{\bfOg}{{\boldsymbol \Og}}
\newcommand{\afmsD}{{\mathscr D}^\vtg}
\newcommand{\affSr}{{\fS_{\vtg,r}}}
\newcommand{\afbfHr}{{\boldsymbol{\mathcal H}_\vtg(r)}}
\newcommand{\afbfHra}{{\boldsymbol{\mathcal H}_\vtg(r')}}
\newcommand{\afbfHrad}{{\boldsymbol{\mathcal H}_\vtg(r'')}}
\def\sY{{\mathcal Y}}
\def\sZ{{\mathcal Z}}
\newcommand{\vtg}{{\!\vartriangle\!}}
\newcommand{\dbfHa}{{\boldsymbol{\mathfrak D}_\vtg}(n)}
\newcommand{\dbfHap}{{\boldsymbol{\mathfrak D}^+_\vtg}(n)}
\newcommand{\tri}{\triangle(n)}
\newcommand{\afsl}{\widehat{\frak{sl}}_n}
\newcommand{\afgl}{\widehat{\frak{gl}}_n}
\newcommand{\afbfSr}{{\boldsymbol{\mathcal S}}_\vtg(n,r)}
\newcommand{\afbfSra}{{\boldsymbol{\mathcal S}}_\vtg(n,r')}
\newcommand{\afbfSrad}{{\boldsymbol{\mathcal S}}_\vtg(n,r'')}
\newcommand{\afbfSrada}{{\boldsymbol{\mathcal S}}_\vtg(n,r'+r'')}
\newcommand{\afbfSNr}{{\boldsymbol{\mathcal S}}_\vtg(N,r)}
\newcommand{\afmbnn}{\mathbb N_\vtg^{n}}
\newcommand{\afmbzn}{\mathbb Z_\vtg^{n}}
\newcommand{\afLa}{\Lambda_\vtg}
\newcommand{\afLanr}{\Lambda_\vtg(n,r)}
\newcommand{\afThn}{\Theta_\vtg(n)}
\newcommand{\afThnp}{\Theta_\vtg^+(n)}
\newcommand{\afThnr}{\Theta_\vtg(n,r)}
\newcommand{\afThnra}{\Theta_\vtg(n,r')}
\newcommand{\afThnrad}{\Theta_\vtg(n,r'')}
\newcommand{\afTh}{\Theta_\vtg}
\newcommand{\dzr}{\dot{\zeta}_r}
\newcommand{\dzra}{\dot{\zeta}_{r'}}
\newcommand{\dzrad}{\dot{\zeta}_{r''}}
\def\leq{\leqslant}\def\geq{\geqslant}
\def\le{\leqslant}\def\ge{\geqslant}
\newcommand{\Th}{\Theta}
\newcommand{\dt}{\delta}
\newcommand{\Dt}{\Delta}
\newcommand{\Og}{\Omega}
\newcommand{\og}{\omega}
\newcommand{\vi}{\varphi}
\newcommand{\up}{v}
 \newcommand{\al}{\alpha}
 \newcommand{\bt}{\beta}
 \newcommand{\h}{\widehat}
 \newcommand{\ti}{\widetilde}
\newcommand{\zr}{\zeta_r}
\newcommand{\sg}{\sigma}
\def\th{\theta}
\newcommand{\bop}{\bigoplus}
\newcommand{\ot}{\otimes}
\newcommand{\Ar}{{A,r}}
\newcommand{\ol}{\overline}
\newcommand{\lra}{\longrightarrow}
\newcommand{\ra}{\rightarrow}
 \newcommand{\la}{{\lambda}}
 \newcommand{\mbn}{\mathbb N}
 \newcommand{\mbq}{\mathbb Q}
 \newcommand{\mbz}{\mathbb Z}
 \newcommand{\bfi}{{\mathbf{i}}}
  \newcommand{\bfd}{{\mathbf{d}}}
 \newcommand{\bfj}{{\mathbf{j}}}
 \newcommand{\bfx}{{\mathbf{x}}}
 \newcommand{\bfy}{{\mathbf{y}}}
\newcommand{\bfU}{{\mathbf{U}}}
\newcommand{\ga}{{\gamma}}
\newcommand{\bfBn}{\mathbf{B}(n)}
\newcommand{\bfBNap}{\mathbf{B}(N)^{\mathrm{ap}}}
\newcommand{\dbfBn}{\dot{\mathbf{B}}(n)}
\newcommand{\bfBr}{\mathbf{B}(n,r)}
\newcommand{\End}{\operatorname{End}}
\newcommand{\diag}{\operatorname{diag}}
\def\ro{\text{\rm ro}}
\def\co{\text{\rm co}}
\def\afsygr{{\fS_{\vtg,r}}}
\def\ttx{{\tt x}}
\def\tty{{\tt y}}
\newcommand{\dbfU}{\dot{{\bfU}}(\h{\frak{sl}}_n)}
\newcommand{\afThnap}{\Theta_\vtg(n)^{\rm ap}}
\newcommand{\afThnpap}{\Theta_\vtg^+(n)^{\mathrm{ap}}}
\newcommand{\afThNpap}{\Theta_\vtg^+(N)^{\mathrm{ap}}}
\newcommand{\afThnrap}{\Theta_\vtg(n,r)^{\rm ap}}
\newcommand{\afThNrap}{\Theta_\vtg(N,r)^{\rm ap}}
\begin{document}
\title{The comultiplication of modified quantum affine $\frak{sl}_n$}

\author{Qiang Fu}
\address{Department of Mathematics, Tongji University, Shanghai, 200092, China.}
\email{q.fu@hotmail.com, q.fu@tongji.edu.cn}

\thanks{$^\dagger$ Supported by the National Natural Science Foundation
of China.}

\begin{abstract}
Let $\dbfU$ be the modified quantum affine $\frak{sl}_n$ and let
$\afbfUslNp$ be the positive part of quantum affine $\frak{sl}_N$.
Let $\dbfBn$ be the canonical basis of $\dbfU$ and let
$\bfBNap$ be the canonical basis of $\afbfUslNp$.
It is proved in \cite{FS} that each structure constant for
the multiplication with respect to $\dbfBn$ coincide with a certain structure constant for
the multiplication with respect to $\bfBNap$ for $n<N$. In this paper we use the theory of  affine quantum Schur algebras to prove that
the structure constants for
the comultiplication with respect to  $\dbfBn$ are determined by the structure constants for the comultiplication with respect to $\bfBNap$ for $n<N$. In particular, the positivity property for the comultiplication of  $\dbfU$ follows from the positivity property for the comultiplication  of  $\afbfUslNp$.
\end{abstract}
 \sloppy \maketitle
\section{Introduction}
In \cite{Gro}, Grojnowski gave a geometric construction of the comultiplication $\Delta$ for quantum group of type $A$. As a consequence,
he obtain that it has positive structure constants with respect to the canonical basis for quantum group of type $A$ (see also \cite{JZ}). The geometric description of $\Delta$ was generalized to the affine case by Lusztig \cite{Lu00}.

Let $\afbfSr$ be the affine quantum Schur algebra over $\mbq(\up)$ (see \cite{GV,Gr99,Lu99}). Let $\bfU(\afsl)$ be the quantum affine $\frak{sl}_n$. The algebra $\bfU(\afsl)$ and $\afbfSr$
are related by an algebra homomorphism $\zeta_r:\bfU(\afsl)\ra\afbfSr$
(see Ginzburg--Vasserot \cite{GV} and Lusztig \cite{Lu99}).
The map $\zeta_r$ can be extended to a surjective algebra homomorphism  from $\dbfHa$ to $\afbfSr$, where $\dbfHa$ is the double Ringel--Hall algebra of affine type $A$ (see \cite[3.8.1]{DDF}).

It is well known that the positive part
$\afbfUslp$ of $\bfU(\afsl)$ has a canonical basis $\bfBn^{\text{ap}}$ with remarkable properties (see Kashiwara \cite{Kas1}, Lusztig \cite{Lu91}).
Let $\dbfU$ be the modified form of $\bfU(\afsl)$. The algebra $\dbfU$ is an associative algebra without unity and
the category of $\bfU(\afsl)$-modules of type $1$ is equivalent to the category of unital $\dbfU$-modules.  The algebra $\dbfU$ has a canonical basis $\dbfBn$ constructed by Lusztig \cite{Lubk}. Let $\bfBr$ be the canonical basis of $\afbfSr$ (see Lusztig \cite{Lu99}). The compatibility of $\dbfBn$ and $\bfBr$ was proved by Schiffmann--Vasserot \cite{SV}.

In \cite{FS}, some good relations among the structure constants for
the multiplication with respect to the canonical bases of the three algebras
$\dbfU$, $\afbfSr$ and $\afbfUslNp$ were established. In this paper
we prove that there are similar relations among the structure constants for
the comultiplication with respect to the canonical bases of these algebras. More precisely, we prove in \ref{prop of canonical basis for affine q-Schur algebras} that the structure constants for the comultiplication with respect to $\bfBr$ are determined by that with respect to $\bfBNap$ for $n<N$.  Using  \ref{prop of canonical basis for affine q-Schur algebras},
we prove in \ref{relation dbfBn bfBNap} that the
structure constants for
the comultiplication with respect to $\dbfBn$ are  determined by that with respect to $\bfBNap$ for $n<N$. In particular the positivity property for the comultiplication with respect to  $\dbfBn$ follows from the positivity property for the
comultiplication with respect to  $\bfBNap$.

{\bf Notation:} For a positive integer $n$, let
$\afThn$ be the set of all matrices
$A=(a_{i,j})_{i,j\in\mbz}$ with $a_{i,j}\in\mbn$ (resp. $a_{i,j}\in\mbz$, $a_{i,j}\geq0$ for all $i\neq j$)  such that
\begin{itemize}
\item[(a)]$a_{i,j}=a_{i+n,j+n}$ for $i,j\in\mbz$; \item[(b)] for
every $i\in\mbz$, both sets $\{j\in\mbz\mid a_{i,j}\not=0\}$ and
$\{j\in\mbz\mid a_{j,i}\not=0\}$ are finite.
\end{itemize}
Let $\afThnp=\{A\in\afThn\mid a_{i,j}=0\text{ for }i\geq j\}.$
For $r\geq 0$,
let $\afThnr=\{A\in\afThn\mid\sg(A)=r\},$
where $\sg(A)=\sum_{1\leq i\leq n,\,
j\in\mbz}a_{i,j}.$

Let $\afmbzn=\{(\la_i)_{i\in\mbz}\mid
\la_i\in\mbz,\,\la_i=\la_{i-n}\ \text{for}\ i\in\mbz\}$ and $\afmbnn=\{(\la_i)_{i\in\mbz}\in \afmbzn\mid \la_i\ge0\text{ for  }i\in\mbz\}.$
$\afmbzn$ has a natural structure of abelian group. For $r\geq 0$  let
$\afLanr=\{\la\in\afmbnn\mid\sg(\la)=r\},$
where $\sg(\la)=\sum_{1\leq i\leq n}\la_i$.

Let $\sZ=\mbz[\up,\up^{-1}]$, where $\up$ is an indeterminate.
For $c\in\mbz$ and $a\in\mbn$ let
$\big[{c\atop a}\big]=\prod_{s=1}^a\frac{v^{c-s+1}-v^{-c+s-1}}{v^s-v^{-s}}$.

\section{Preliminaries}
Let
$I=\mbz/n\mbz=\{1,2,\ldots,n\}$ and let $(c_{i,j})_{i,j\in I}$ be the Cartan matrix of affine type $A$.
Let $\dbfHa$ be the double Ringel--Hall algebra  of
affine type $A$. The algebra $\dbfHa$ is isomorphic to the quantum loop algebra $\bfU(\afgl)$ (see \cite[2.5.3]{DDF}).
By {\cite[2.3.1 and 2.3.5]{DDF}}  we have the following result.

\begin{Prop}\label{presentation dHallAlg}
The algebra $\dbfHa$ is the $\mbq(v)$-algebra generated by
$E_i,\ F_i,\  K_i,\ K_i^{-1},\ \sfz^+_s,\ \sfz^-_s,$ for $i\in I,\
s\in\mbz^+$, and relations:
\begin{itemize}
\item[(QGL1)] $K_{i}K_{j}=K_{j}K_{i},\ K_{i}K_{i}^{-1}=1$;

\item[(QGL2)] $K_{i}E_j=\up^{\dt_{i,j}-\dt_{i,j+1}}E_jK_{i}$,
$K_{i}F_j=\up^{-\dt_{i, j}+\dt_{ i,j+1}} F_jK_i$;

\item[(QGL3)] $E_iF_j-F_jE_i=\delta_{i,j}\frac
{\ti K_{i}-{\ti K_{i}}^{-1}}{\up-\up^{-1}}$, where $\ti K_i=
K_iK_{i+1}^{-1}$ (and $\ti K_n=K_nK_1^{-1}$);

\item[(QGL4)]
$\displaystyle\sum_{a+b=1-c_{i,j}}(-1)^a\leb{1-c_{i,j}\atop a}\rib
E_i^{a}E_jE_i^{b}=0$ for $i\not=j$;

\item[(QGL5)]
$\displaystyle\sum_{a+b=1-c_{i,j}}(-1)^a\leb{1-c_{i,j}\atop a}\rib
F_i^{a}F_jF_i^{b}=0$ for $i\not=j$;

\item[(QGL6)]
$\sfz^+_s$ and $\sfz^-_s$ are central elements in $\dbfHa$.
\end{itemize}
where $i,j\in I$ and $s,t\in \mbz^+$.
It is a Hopf algebra with
comultiplication $\Dt$ defined
by
\begin{equation*}\label{Hopf}
\aligned
&\Delta(E_i)=E_i\otimes\ti K_i+1\otimes
E_i,\quad\Delta(F_i)=F_i\otimes
1+\ti K_i^{-1}\otimes F_i,\\
&\Delta(K^{\pm 1}_i)=K^{\pm 1}_i\otimes K^{\pm 1}_i,\quad
\Delta(\sfz_s^\pm)=\sfz_s^\pm\otimes1+1\otimes
\sfz_s^\pm;\\
\endaligned
\end{equation*}
 where $i\in I$ and $s\in \mbz^+$.
\end{Prop}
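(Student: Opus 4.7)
The plan is to derive the presentation by constructing $\dbfHa$ as the reduced Drinfeld double of the Ringel--Hall algebra $\Hallq$ of the cyclic quiver of affine type $A$, following the approach of \cite{DDF}. I would proceed in three stages, handling the algebra structure first and the Hopf structure last.

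First, identify the structure of $\Hallq$ on finite-dimensional nilpotent representations of the cyclic quiver on $n$ vertices. By Ringel's theorem, the composition subalgebra generated by the classes $u_i = [S_i]$ of the simples realizes $\bfU(\afsl)^+$, so these generators automatically satisfy the Serre relations in QGL4. By Schiffmann's structure theorem, $\Hallq$ decomposes as the composition subalgebra tensored with a polynomial algebra in central generators $\sfz^+_s$ arising from semisimple modules supported on the whole cyclic quiver; the analogous picture on the negative side supplies the $\sfz^-_s$.

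Second, form the reduced Drinfeld double of the extended Hall algebra $\Hallq \otimes \mbq(v)[K_i^{\pm 1}]$ with respect to Green's Hopf pairing. The triangular relations QGL1--QGL3 come from a direct calculation of the pairing on simple generators, yielding $[E_i,F_j] = \delta_{i,j}(\tilde K_i - \tilde K_i^{-1})/(v - v^{-1})$; the centrality statement QGL6 follows from the vanishing of cross pairings $(\sfz^+_s, u_j) = 0$ together with the centrality of $\sfz^\pm_s$ inside $\Hallq^\pm$. Finally, transport the Green--Xiao Hopf structure on $\Hallq$ to the double and evaluate $\Delta$ on generators: for simples, the admissible short exact sequences produce $\Delta(E_i) = E_i \otimes \tilde K_i + 1 \otimes E_i$; the group-like formula for $K_i^{\pm 1}$ is built into the extension; and primitivity of $\sfz^\pm_s$ follows because these elements are supported on homogeneous tubes whose filtrations by the fixed isomorphism class force the trivial decomposition.

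The main obstacle, and the technical heart of \cite{DDF}, is isolating central generators $\sfz^\pm_s$ that are simultaneously central in the double and primitive under $\Delta$. This rests on a careful choice of basis for the non-composition part of $\Hallq$ picking out elements whose Green pairing with every simple $u_j$ vanishes; this is precisely what underlies the identification of $\dbfHa$ with the Drinfeld new realization of $\bfU(\afgl)$ and thereby yields the stated presentation.
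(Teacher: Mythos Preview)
The paper does not prove this proposition; it simply records it as a consequence of \cite[2.3.1 and 2.3.5]{DDF}. Your outline correctly sketches the construction carried out in that reference---building $\dbfHa$ as the reduced Drinfeld double of the extended Ringel--Hall algebra of the cyclic quiver, invoking Ringel's and Schiffmann's structure theorems for the positive part, and reading off the Hopf structure from Green's comultiplication---so your approach is exactly that of the cited source, and the subtlety you flag about choosing the $\sfz_s^\pm$ to be simultaneously central and primitive is indeed the point requiring care there.
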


The extended affine Hecke algebra $\afbfHr$ is defined to be the $\mbq(\up)$-algebra generated by
$T_i,$ $X_j^{\pm 1}(\text{$1\leq i\leq r-1$, $1\leq j\leq r$}),$
 and relations
$$\aligned
 & (T_i+1)(T_i-\up^2)=0,
\;\;T_iT_{i+1}T_i=T_{i+1}T_iT_{i+1},\;\;T_iT_j=T_jT_i\;(|i-j|>1),\\
 & X_iX_i^{-1}=1=X_i^{-1}X_i,\;\; X_iX_j=X_jX_i,\;\; T_iX_iT_i=\ttv^2 X_{i+1},\;\;  X_jT_i=T_iX_j\;(j\not=i,i+1).
\endaligned$$
Let $\bfOg$ be a vector space over $\mbq(\up)$ with basis $\{\og_i\mid i\in\mathbb Z\}$.
Let $I(n,r)=\{(i_1,\ldots,i_r)\in\mbz^r\mid 1\leq i_k\leq
n,\,\forall k\}.$  For
 $\bfi=(i_1,\ldots,i_r)\in\mbz^r$, write
$\og_\bfi=\og_{i_1}\ot\og_{i_2}\ot\cdots\ot \og_{i_r}\in\bfOg^{\ot r}.$
The tensor space $\bfOg^{\ot r}$
admits a right $\afbfHr$-module structure defined by
\begin{equation*}\label{afH action}
\begin{cases}
\og_{\bf i}\cdot X_t^{-1}
=\og_{(i_1,\cdots, i_{t-1},i_t+n,i_{t+1},\cdots,i_r)},\qquad \text{ for all }\bfi\in \mbz^r;\\
{\og_{\bf i}\cdot T_k=\left\{\begin{array}{ll} \ttv^2\og_{\bf
i},\;\;&\text{if $i_k=i_{k+1}$;}\\
\ttv\og_{(i_1,\cdots,i_{k+1},i_k,\cdots,i_r)},\;\;&\text{if $i_k<i_{k+1}$;}\qquad\text{ for all }\bfi\in I(n,r),\\
\ttv\og_{(i_1,\cdots,i_{k+1},i_k,\cdots,i_r)}+(\ttv^2-1)\og_{\bf i},\;\;&\text{if
$i_{k+1}<i_k$,}
\end{array}\right.}
\end{cases}
\end{equation*}
where $1\leq k\leq r-1$ and $1\le t\le r$ (cf. \cite{VV99}).
The algebra $$\afbfSr:=\End_{\afbfHr}(\bfOg^{\ot r})$$
is called an affine quantum Schur algebra.

For $\la\in\afLanr$, let $\fS_\la:=\fS_{(\la_1,\ldots,\la_n)}$
be the corresponding standard Young subgroup of the symmetric group $\fS_r$.
For $\la\in\afLanr$ let $x_\la=v^{\ell(w_{0,\la})}\og_{\bfi_\la}$, where
$w_{0,\la}$ is the longest element in $\frak S_\la$ and
$$\bfi_\la=(\underbrace{1,\ldots,1}_{\la_1},\underbrace{2,\ldots,2}_{\la_2},
\ldots,\underbrace{n,\ldots,n}_{\la_n})\in I(n,r).$$
Then we have $\bfOg^{\ot r}=\oplus_{\la\in\afLanr}x_{\la}\afbfHr$.

The vector space
$\bfOg$ is a natural $\dbfHa$-module with the action
$E_i\cdot \og_s=\dt_{i+1,\bar s}\og_{s-1}$, $F_i\cdot \og_s=\dt_{i,\bar
s}\og_{s+1}$, $K_i^{\pm 1}\cdot \og_s=\up^{\pm\dt_{i,\bar s}}\og_s$, $\sfz_t^+\cdot\og_s=\og_{s-tn}$, {and }
$\sfz_t^-\cdot\og_s=\og_{s+tn},$ where $\bar s$ is the integer $s$ modulo $n$.
The Hopf algebra structure of $\dbfHa$ induces a $\dbfHa$-module $\bfOg^{\ot r}$. We denote by $\zeta_r:\dbfHa\ra\End(\bfOg^{\ot r})$ the corresponding
representation. By \cite[3.8.1]{DDF} we have $\zeta_r(\dbfHa)=\afbfSr$.

For $r',r''\in\mbn$, there is a natural injective algebra homomorphism
$$\vi_{r',r''}:\End(\bfOg^{\ot r'})\ot\End(\bfOg^{\ot r''})\ra\End(\bfOg^{\ot r'+r''})$$
such that $\vi_{r',r''}(f\ot g)(w_1\ot w_2)
=f(w_1)\ot g(w_2)$ for $f\in\End(\bfOg^{\ot r'})$,
$g\in\End(\bfOg^{\ot r''})$, $w_1\in\bfOg^{\ot r'}$,
$w_2\in\bfOg^{\ot r''}$. By restricting the map $\vi_{r',r''}$ to $\afbfSra\ot\afbfSrad$, we obtain an algebra isomorphism $$\vi_{r',r''}:\afbfSra\ot\afbfSrad\ra\vi_{r',r''}(\afbfSra\ot\afbfSrad).$$
It is clear that we have the following  commutative diagram:
$$\begin{CD}
\dbfHa @>\Delta>>\dbfHa\ot\dbfHa\\
@V\zeta_{r'+r''} VV  @VV\zeta_{r'}\ot\zeta_{r''}V\\
\vi_{r',r''}(\afbfSra\ot\afbfSrad) @<<\vi_{r',r''}<
\afbfSra\ot\afbfSrad.
\end{CD}$$
So $\afbfSrada=\zeta_{r'+r''}(\dbfHa)\han\vi_{r',r''}(\afbfSra\ot\afbfSrad)$.
By restricting $\vi_{r'+r''}^{-1}$ to $\afbfSrada$, we obtain an
algebra homomorphism
$$\Delta_{r',r''}:=\vi_{r'+r''}^{-1}:
\afbfSrada\ra\afbfSra\ot\afbfSrad.$$

\section{The connection between $\bfBr$ and $\bfBNap$}

Let $\dbfHap$ be the $\mbq(\up)$-subalgebra of $\dbfHa$ generated by
$E_i$ and $\sfz_s^+$ for $i\in I$ and $s\in \mbz^+$.
Let $\bfBn:=\{\th_A^+\mid A\in\afThnp\}$ be the canonical basis of
$\dbfHap$ (see \cite{VV99}).
For $A=(a_{i,j})\in\afThnp$ let $\bfd(A)=(\sum_{s\leq i,t\geq i+1}a_{s,t})_{i\in \mbz}\in\afmbnn$.
For $\bfj\in\afmbzn$ let $\ti K^\bfj=\prod_{1\leq i\leq n}(\ti K_i)^{j_i}$.
For $A,B\in\afThnp$ we write
\begin{equation}\label{fABC}
\Delta(\th_A^+)=\sum_{B,C\in\afThnp}
\sff_{A,B,C}\th_B^+\ot \th_C^+\ti{K}^{\bfd(B)},
\end{equation}
where $\sff_{A,B,C}\in\sZ$. Note that if $\sff_{A,B,C}\not=0$ then
$\bfd(A)=\bfd(B)+\bfd(C)$.

A matrix $A=(a_{i,j})\in\afThn$ is said to be aperiodic if for every integer $l\neq0$ there exists $1\leq i\leq n$ such that $a_{i,i+l}=0$. Let $\afThnap$ be the set of all aperiodic matrices in $\afThn$. Let
$\afThnpap=\afThnp\cap\afThnap$.

Let $\bfU(\afsl)$ be the $\mbq(\up)$-subalgebra of $\dbfHa$ generated by the elements $E_i$, $F_i$ and $\ti K_i^{\pm 1}$ for $i\in I$.  Let $\bfU(\afsl)^+$ be the  $\mbq(\up)$-subalgebra of $\bfU(\afsl)$ generated by the elements $E_i$ for $i\in I$. Then $\bfU(\afsl)^+$ is isomorphic to the composition algebra of the cyclic quiver $\tri$ (see Ringel \cite{Ri93}).
Let
\begin{equation*}\label{bfBnap}
\bfBn^{\text{ap}}:=\{\th_A^+\mid A\in\afThnpap\}.
\end{equation*}
Then by \cite{Lu92}, we know that the set $\bfBn^{\text{ap}}$
forms a $\mbq(v)$-basis for $\afbfUslp$ and is called the canonical basis of $\afbfUslp$.
The following result are due to Lusztig (see \cite{Lu91} and {\cite[14.4.13]{Lubk}}).
\begin{Thm}\label{positive affine sln}
For $A,B,C\in\afThnpap$ we have $\sff_{A,B,C}\in\mbn[\up,\up^{-1}]$.
\end{Thm}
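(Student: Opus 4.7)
The plan is to follow Lusztig's geometric construction of $\afbfUslp$ via perverse sheaves on the representation varieties of the cyclic quiver $\tri$. For each dimension vector $\bfd\in\afmbnn$, form the variety $E_\bfd$ of nilpotent representations of $\tri$ with its natural gauge-group action. Lusztig produces for each $A\in\afThnpap$ with $\bfd(A)=\bfd$ an equivariant simple perverse sheaf $\mathrm{IC}(A)$ on $E_\bfd$, and these classes realize the canonical basis elements $\theta_A^+$ under the identification of the relevant equivariant Grothendieck group with $\afbfUslp$; here the quantum parameter $\up$ corresponds to a cohomological shift (Tate twist).

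Next I would realize the comultiplication geometrically as a restriction functor. For $\bfd=\bfd'+\bfd''$ with $\bfd'=\bfd(B)$ and $\bfd''=\bfd(C)$, consider the correspondence
\[
E_{\bfd'}\times E_{\bfd''}\;\xleftarrow{\;p_1\;}\;F_{\bfd',\bfd''}\;\xrightarrow{\;p_2\;}\;E_\bfd,
\]
where $F_{\bfd',\bfd''}$ parametrizes a representation together with a subrepresentation of dimension $\bfd''$, $p_1$ reads off the sub and the quotient, and $p_2$ is the forgetful map. Up to explicit shifts, Lusztig's restriction functor $\mathrm{Res}_{\bfd',\bfd''}:=(p_1)_!\,p_2^*$ implements $\Delta$ on the canonical basis: decomposing
\[
\mathrm{Res}_{\bfd',\bfd''}(\mathrm{IC}(A))\;\cong\;\bigoplus_{B,C}V_{A,B,C}\otimes\mathrm{IC}(B)\boxtimes\mathrm{IC}(C),
\]
with $V_{A,B,C}$ a graded multiplicity space, and matching with \eqref{fABC}, one identifies the graded dimension $\dim_\up V_{A,B,C}$ with $\sff_{A,B,C}$; the Cartan factor $\ti K^{\bfd(B)}$ arises from the twist distinguishing the naive external tensor product from the coalgebra structure on $\dbfHa$.

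The key step, and the main obstacle, is to verify that $\mathrm{Res}_{\bfd',\bfd''}$ sends semisimple complexes to semisimple complexes with multiplicity-space Poincar\'e polynomials lying in $\mbn[\up,\up^{-1}]$. On a suitable stratification of $F_{\bfd',\bfd''}$ the map $p_1$ becomes proper, so the Beilinson--Bernstein--Deligne--Gabber decomposition theorem applies to $(p_1)_!\,p_2^*\mathrm{IC}(A)$, yielding a direct sum of shifts of simple perverse sheaves. Hard Lefschetz, together with Verdier self-duality of $\mathrm{IC}(A)$, forces the cohomological shifts of the simple summands to appear in symmetric pairs about $0$, so that $\dim_\up V_{A,B,C}$ is manifestly a symmetric Laurent polynomial with non-negative integer coefficients.

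Finally, to ensure that the expansion \eqref{fABC} takes place inside $\bfBn^{\mathrm{ap}}\otimes\bfBn^{\mathrm{ap}}$ rather than over the larger index set $\afThnp\times\afThnp$, I would invoke Lusztig's aperiodicity theorem: the summands of an aperiodic $\mathrm{IC}(A)$ appearing in $\mathrm{Res}_{\bfd',\bfd''}(\mathrm{IC}(A))$ are themselves aperiodic (so that only $B,C\in\afThnpap$ contribute). Combining the geometric realization, the identification of $\Delta$ with $\mathrm{Res}$, the decomposition theorem, and aperiodicity then yields $\sff_{A,B,C}\in\mbn[\up,\up^{-1}]$ for all $A,B,C\in\afThnpap$, as claimed.
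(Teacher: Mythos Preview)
The paper does not give its own proof of this statement: it is quoted as a result of Lusztig, with references to \cite{Lu91} and \cite[14.4.13]{Lubk}, and is then used as input to Corollary~\ref{relation bfBn bfBNap} and Theorems~\ref{prop of canonical basis for affine q-Schur algebras} and~\ref{relation dbfBn bfBNap}. Your outline is essentially a sketch of what lies behind those citations---the perverse-sheaf realization of $\afbfUslp$ on nilpotent representation varieties of $\tri$, the identification of $\Delta$ with Lusztig's restriction functor, and the decomposition theorem giving nonnegative graded multiplicities---so in spirit you are reproducing the cited argument rather than diverging from the paper.

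Two small points worth tightening. First, your description of why $(p_1)_!p_2^*$ preserves semisimplicity is slightly off: in Lusztig's setup one fixes a subspace $W$, the map $p_2$ is a closed embedding and $p_1$ is a vector-bundle (affine) fibration, so $(p_1)_!$ is, up to shift, an equivalence on the relevant categories; properness and the decomposition theorem enter on the induction side, and restriction inherits semisimplicity by adjunction/duality with induction. Second, the closure under $\Delta$ inside the aperiodic part does not require a separate geometric ``aperiodicity theorem'': since $\afbfUslp$ is generated by the $E_i$ and $\Delta(E_i)=E_i\otimes\ti K_i+1\otimes E_i$, the image $\Delta(\afbfUslp)$ already lies in the appropriate tensor product, so only aperiodic $B,C$ can occur with nonzero $\sff_{A,B,C}$ when $A$ is aperiodic. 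With these adjustments your sketch matches the content of the cited references.
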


Let $\afsygr$ be the group consisting of all permutations
$w:\mbz\ra\mbz$ such that $w(i+r)=w(i)+r$ for $i\in\mbz$.
For $\la,\mu\in\afLanr$, let
$\afmsD_\la=\{d\mid d\in\affSr,\ell(wd)=\ell(w)+\ell(d)\text{ for
$w\in\fS_\la$}\}$ and $\afmsD_{\la,\mu}=\afmsD_{\la}\cap{\afmsD_{\mu}}^{-1}$.
For $\la\in\afLanr$,
$1\leq i\leq n$ and $k\in\mbz$ let
$
R_{i+kn}^{\la}=\{\la_{k,i-1}+1,\la_{k,i-1}+2,\ldots,\la_{k,i-1}+\la_i
=\la_{k,i}\},$
where $\la_{k,i-1}=kr+\sum_{1\leq t\leq i-1}\la_t.$
By \cite[7.4]{VV99} (see also \cite[9.2]{DF10}), there is
a bijective map
\begin{equation}\label{jmath}
{\jmath_\vtg}:\{(\la, d,\mu)\mid
d\in\afmsD_{\la,\mu},\la,\mu\in\afLanr\}\lra\afThnr
 \end{equation}
sending $(\la, d,\mu)$ to the matrix $A=(|R_k^\la\cap dR_l^\mu|)_{k,l\in\mbz}$.

The algebra $\afbfSr$ has a normalized  $\mbq(\up)$-basis $\{[A]\mid A\in\afThnr\}$ (cf. \cite[1.9]{Lu99}).
Let
$$\bfBr:=\{\th_{\Ar}\mid A\in\afThnr\}$$ be the
canonical basis of $\afbfSr$ defined by Lusztig \cite{Lu99}. For
$\la,\mu\in\afLanr$ and $d\in\afmsD_{\la,\mu}$ let
$\th_{\la,\mu}^d=\th_{A,r}$, where $A=\jmath_\vtg(\la,d,\mu)$.
Let $\rho$ be the permutation of $\mbz$ sending $j$ to $j+1$ for all $j\in\mbz$.
Then for $\la\in\afLanr$ and $m\in\mbz$ we have
$\th_{\la,\la}^{\rho^{mr}}=[\jmath_\vtg(\la,\rho^{mr},\la)]$.

Assume $r=r'+r''$ with $r',r''\in\mbn$. For $A,B\in\afThnr$
we write
\begin{equation}\label{gABCr}
\Dt_{r',r''}(\th_{A,r})=\sum_{B\in\afThnra\atop C\in\afThnrad}\g_{A,B,C}^{r',r''}\th_{B,r'}\ot
\th_{C,r''}
\end{equation}
where $\g_{A,B,C}^{r',r''}\in\mbq(v)$.

Let $T_\rho =X_1^{-1}\ti T_1^{-1}\cdots\ti T_{r-1}^{-1}\in\afbfHr$, where
$\ti T_i=v^{-1}T_i$. We are now ready to compute
$\Delta_{r',r''}(\th_{\la,\la}^{\rho^{mr}})$. We need the following lemma.

\begin{Lem}\label{Trho}
For $1\leq k\leq r$, we have $(\ti T_{k-1}\cdots\ti T_2\ti T_1)^k
=X_1X_2\cdots X_k$. In particular, we have
$T_\rho^r=X_1^{-1}X_2^{-1}\cdots X_r^{-1}$.
\end{Lem}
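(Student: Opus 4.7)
The plan is to prove the identity by induction on $k$, and then obtain the ``in particular'' conclusion by setting $k=r$ and inverting. Let me write $u_k := \ti T_{k-1} \ti T_{k-2} \cdots \ti T_1 X_1$ (interpreting the product so that the $k=1$ base case reduces to $X_1$). Note that $u_r = T_\rho^{-1}$, so $u_r^r = T_\rho^{-r}$.

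The first ingredient is an \emph{auxiliary identity}
\[ u_k \cdot (\ti T_1 \ti T_2 \cdots \ti T_{k-1}) = X_k, \]
which I would establish by iterated application of the defining relation $\ti T_j X_j \ti T_j = X_{j+1}$ for $j = 1,\ldots,k-1$, combined with the commutativity $X_j \ti T_i = \ti T_i X_j$ for $i\neq j-1,j$. Concretely, from $\ti T_1 X_1 \ti T_1 = X_2$ one gets $u_k \ti T_1 = \ti T_{k-1}\cdots \ti T_2 X_2$; then $\ti T_2 X_2 \ti T_2 = X_3$ yields $u_k \ti T_1 \ti T_2 = \ti T_{k-1}\cdots \ti T_3 X_3$; iterating gives the identity.

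The inductive step is the technical heart. Assuming $u_k^k = X_1 \cdots X_k$, the key preliminary is the ``index-shift'' commutation
\[ u_{k+1} \ti T_j = \ti T_{j-1} u_{k+1}, \quad 2 \leq j \leq k, \]
which I would derive using only the braid relation $\ti T_j \ti T_{j-1} \ti T_j = \ti T_{j-1} \ti T_j \ti T_{j-1}$, the commutations $\ti T_i \ti T_l = \ti T_l \ti T_i$ for $|i-l|\geq 2$, and $X_1 \ti T_j = \ti T_j X_1$ for $j \geq 2$. This expresses that conjugation by $u_{k+1}$ shifts the Hecke indices down by one. Writing $u_{k+1} = \ti T_k u_k$ and using the index-shift iteratively together with the auxiliary identity at level $k+1$, I would rebracket $u_{k+1}^{k+1} = (\ti T_k u_k)^{k+1}$ to extract a single factor $X_{k+1}$ and reduce the remainder to $u_k^k$; the induction hypothesis together with commutativity of the $X_i$'s then gives $u_{k+1}^{k+1} = X_{k+1} \cdot X_1 \cdots X_k = X_1 \cdots X_{k+1}$.

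For the ``in particular'' statement, one reads off $T_\rho^{-1} = u_r$ from the definition of $T_\rho$, so the case $k=r$ of the identity gives $T_\rho^{-r} = u_r^r = X_1 X_2 \cdots X_r$, and inverting (using that the $X_i$'s commute) produces $T_\rho^r = X_1^{-1} X_2^{-1} \cdots X_r^{-1}$. The principal obstacle will be the bookkeeping in the inductive step: one must orchestrate the interplay between the index-shift commutation (a purely finite-Hecke manipulation) and the auxiliary identity (the essential $X$-relation) so that exactly one fresh $X_{k+1}$ emerges at each stage and the rest of the product collapses cleanly onto the previously established $u_k^k$.
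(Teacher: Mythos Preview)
Your proposal is correct and follows essentially the same strategy as the paper: induction on $k$, with the inductive step carried by a chain of equalities $u_k^k = u_k^{k-1}(\ti T_{k-1})u_{k-1} = u_k^{k-2}(\ti T_{k-2}\ti T_{k-1})u_{k-1}^2 = \cdots = u_k(\ti T_1\cdots\ti T_{k-1})u_{k-1}^{k-1}$, and the final expression evaluated via your auxiliary identity $u_k(\ti T_1\cdots\ti T_{k-1})=X_k$ together with the induction hypothesis.

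The only organizational difference is in how the chain is justified. The paper introduces $\ttx_{s,k}=u_k^{k-s}(\ti T_{k-s}\cdots\ti T_{k-1})u_{k-1}^s$ and proves $\ttx_{s,k}=\ttx_{s+1,k}$ by rewriting the middle block as $\tty_{s,k}=\ti T_{k-1}\cdots\ti T_{k-s-1}\ti T_{k-s}\cdots\ti T_{k-1}$ and applying the palindrome identity $\tty_{s,k}=\ti T_{k-s-1}\cdots\ti T_{k-1}\ti T_{k-2}\cdots\ti T_{k-s-1}$. Your index-shift relation $u_{k+1}\ti T_j=\ti T_{j-1}u_{k+1}$ (for $2\le j\le k$) accomplishes the same step more directly: applying it $s$ times to $u_{k+1}(\ti T_{k+1-s}\cdots\ti T_k)$ yields $(\ti T_{k-s}\cdots\ti T_{k-1})u_{k+1}=(\ti T_{k-s}\cdots\ti T_k)u_k$, which is exactly $\ttx_{s,k+1}=\ttx_{s+1,k+1}$. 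So your packaging is slightly cleaner but equivalent; both proofs rest on the same braid and commutation relations.
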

\begin{proof}
We apply induction on $k$. The case $k=1,2$ is trivial. We now assume $k>2$.
For $1\leq s\leq k-1$ let $\ttx_{s,k}=(\ti T_{k-1}\cdots\ti T_1X_1)^{k-s}
(\ti T_{k-s}\cdots\ti T_{k-2}\ti T_{k-1})(\ti T_{k-2}\cdots
\ti T_1X_1)^s$.
For $1\leq s\leq k-2$ we have
$\ttx_{s,k}=(\ti T_{k-1}\cdots\ti T_1X_1)^{k-s-1}
\tty_{s,k}(\ti T_{k-s-2}\cdots
\ti T_1X_1)(\ti T_{k-2}\cdots
\ti T_1X_1)^s $
where $\tty_{s,k}=\ti T_{k-1}\cdots\ti T_{k-s}\ti T_{k-s-1}\ti T_{k-s}
\cdots\ti T_{k-1}$. Since $\tty_{s,k}=\ti T_{k-s-1}\tty_{s-1,k}
\ti T_{k-s-1}$ we have $\tty_{s,k}=\ti T_{k-s-1}\cdots\ti T_{k-2}
\ti T_{k-1}\ti T_{k-2}\cdots\ti T_{k-s-1}$. It follows that
$$\ttx_{s,k} =(\ti T_{k-1}\cdots\ti T_1X_1)^{k-s-1}
(\ti T_{k-s-1}\cdots
\ti T_{k-2}\ti T_{k-1})(\ti T_{k-2}\cdots
\ti T_1X_1)^{s+1}=\ttx_{s+1,k}$$ for $1\leq s\leq k-2$.
Consequently,  by induction we have
$(\ti T_{k-1}\cdots\ti T_2\ti T_1)^k=\ttx_{1,k}=\ttx_{k-1,k}
=(\ti T_{k-1}\cdots\ti T_1X_1)(\ti T_1\ti T_2\cdots\ti T_{k-1})
(\ti T_{k-2}\cdots\ti T_2\ti T_1)^{k-1}=X_k\cdots X_2X_1$.
\end{proof}

Assume $r=r'+r''$ with $r',r''\in\mbn$.
There is an injective algebra homomorphism
$$\kappa_{r',r''}:\afbfHra\ot\afbfHrad\ra\afbfHr$$
such that
$\kappa_{r',r''}(T_i\ot 1)=T_i$ ($1\leq i\leq r'-1$),
$\kappa_{r',r''}(X_j\ot 1)=X_j$ ($1\leq j\leq r'$),
$\kappa_{r',r''}(1\ot T_i)=T_{r'+i}$ ($1\leq i\leq r''-1$),
$\kappa_{r',r''}(1\ot X_j)=X_{r'+j}$ ($1\leq j\leq r''$). We will indentify
$\afbfHra\ot\afbfHrad$ as a subalgebra of $\afbfHr$ via $\kappa_{r',r''}$.

\begin{Lem}\label{coproduct formula}
Assume $r=r'+r''$ with $r',r''\in\mbn$.
For $\la\in\afLanr$ and $m\in\mbz$ we have
$$\Delta_{r',r''}(\th_{\la,\la}^{\rho^{mr}})
=\sum_{\al\in\afLa(n,r'),\bt\in\afLa(n,r'')\atop\la=\al+\bt}
\th_{\al,\al}^{\rho^{mr'}}\ot
\th_{\bt,\bt}^{\rho^{mr''}}.$$
\end{Lem}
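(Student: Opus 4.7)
The plan is to prove the identity by evaluating both sides as endomorphisms of $\bfOg^{\ot r}$, using the identification $\bfOg^{\ot r}\cong\bfOg^{\ot r'}\ot\bfOg^{\ot r''}$ coming from $\kappa_{r',r''}$.

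First I would identify $\th_{\la,\la}^{\rho^{mr}}$ explicitly as an endomorphism. The matrix $A=\jmath_\vtg(\la,\rho^{mr},\la)$ satisfies $a_{k,l}=|R_k^\la\cap\rho^{mr}R_l^\la|=\la_k\,\dt_{l,k-mn}$ (since $\rho^{mr}R^\la_{i+kn}=R^\la_{i+(k+m)n}$), so $A$ is a shifted diagonal. The element $\rho^r\in\afsygr$ is the shift by $r$ and hence central in $\afsygr$, so the double coset $\fS_\la\rho^{mr}\fS_\la$ reduces to $\rho^{mr}\fS_\la$. Using this together with the fact that for such shifted-diagonal $A$ the canonical basis element $[A]$ coincides, up to the standard $v$-normalization, with the corresponding standard-basis element, one obtains
$$
\th_{\la,\la}^{\rho^{mr}}(x_\mu\cdot h)\;=\;\dt_{\mu,\la}\,x_\la\cdot T_\rho^{mr}\cdot h,\qquad \mu\in\afLanr,\;h\in\afbfHr.
$$
In other words, $\th_{\la,\la}^{\rho^{mr}}$ equals the composition of the projector $e_\la:=\th_{\la,\la}^{1}$ onto $x_\la\afbfHr$ (along the decomposition $\bfOg^{\ot r}=\bigoplus_{\nu\in\afLanr}x_\nu\afbfHr$) with right multiplication by $T_\rho^{mr}$.

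Second, Lemma~\ref{Trho} gives $T_\rho^r=(X_1X_2\cdots X_r)^{-1}$, a symmetric polynomial in the Bernstein generators $X_i$ and hence central in $\afbfHr$. Since $\kappa_{r',r''}$ is an algebra homomorphism with $\kappa_{r',r''}(X_j\ot1)=X_j$ and $\kappa_{r',r''}(1\ot X_j)=X_{r'+j}$, we get $\kappa_{r',r''}(T_\rho^{mr'}\ot T_\rho^{mr''})=T_\rho^{mr}$. Consequently, on the tensor factorisation,
$$
(w_1\ot w_2)\cdot T_\rho^{mr}=(w_1\cdot T_\rho^{mr'})\ot(w_2\cdot T_\rho^{mr''}).
$$
For the idempotent part, the inclusion $(x_\al\afbfHra)\ot(x_\bt\afbfHrad)\subseteq x_{\al+\bt}\afbfHr$ holds because the multiset of residues mod $n$ of the concatenated word $\bfi_\al\bfi_\bt$ equals $\al+\bt$, and this multiset is preserved by the action of $\afbfHr$ (the $T_i$ permute entries, the $X_j^{\pm1}$ shift by multiples of $n$). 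Combined with $\bfOg^{\ot r'}\ot\bfOg^{\ot r''}=\bigoplus_{\al,\bt}(x_\al\afbfHra)\ot(x_\bt\afbfHrad)$, this yields $e_\la(w_1\ot w_2)=\sum_{\al+\bt=\la}e_\al(w_1)\ot e_\bt(w_2)$.

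Putting these three ingredients together, the action of $\th_{\la,\la}^{\rho^{mr}}$ on $w_1\ot w_2$ is
$$
\sum_{\al+\bt=\la}\bigl(e_\al(w_1)\cdot T_\rho^{mr'}\bigr)\ot\bigl(e_\bt(w_2)\cdot T_\rho^{mr''}\bigr)
=\sum_{\al+\bt=\la}\th_{\al,\al}^{\rho^{mr'}}(w_1)\ot\th_{\bt,\bt}^{\rho^{mr''}}(w_2),
$$
which by the defining property $\Delta_{r',r''}=\vi_{r',r''}^{-1}|_{\afbfSrada}$ is exactly the claimed formula. The main obstacle is Step~1: verifying the explicit form of $\th_{\la,\la}^{\rho^{mr}}$ requires careful accounting of the $v$-normalisation in the definition of $[A]$ for shifted-diagonal $A$, and in particular checking that no lower-order Kazhdan--Lusztig corrections arise beyond the standard-basis expression for this special $A$.
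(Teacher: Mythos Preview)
Your proof is correct and follows essentially the same approach as the paper: identify $\th_{\la,\la}^{\rho^{mr}}$ as the projector $e_\la$ composed with right multiplication by $T_{\rho}^{mr}$, use Lemma~\ref{Trho} to write $T_\rho^{mr}=(X_1\cdots X_r)^{-m}$ so that it factorises under $\kappa_{r',r''}$, and then compare actions on $x_\ga h'\ot x_\dt h''$ via the decomposition $x_\la\afbfHr=\bigoplus_{\ga+\dt=\la}x_\ga\afbfHra\ot x_\dt\afbfHrad$. Your worry in Step~1 is addressed in the paper by the remark preceding the lemma that $\th_{\la,\la}^{\rho^{mr}}=[\jmath_\vtg(\la,\rho^{mr},\la)]$, i.e.\ for this shifted-diagonal matrix the canonical basis element coincides with the normalised standard basis element, so no Kazhdan--Lusztig corrections arise.
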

\begin{proof}
Note that $\th_{\la,\la}^{\rho^{mr}}(x_\mu h)=\dt_{\la,\mu}
x_\mu hT_{\rho^{mr}}$ for $\mu\in\afLanr$ and $h\in\afbfHr$.
Let $\mathscr Y_{\la,m}=\sum_{\al\in\afLa(n,r'),\bt\in\afLa(n,r'')\atop\la=\al+\bt}
\th_{\al,\al}^{\rho^{mr'}}\ot
\th_{\bt,\bt}^{\rho^{mr''}}$.
Clearly, for $\la\in\afLanr$ we have
$$x_\la\afbfHr=\bop_{\ga\in\afLa(n,r'),\,\dt\in\afLa(n,r'')\atop
\la=\ga+\dt}x_\ga\afbfHra\ot x_\dt\afbfHrad.$$
By \ref{Trho}, for $\ga\in\afLa(n,r')$, $\dt\in\afLa(n,r'')$, $h'\in\afbfHra$, $h''\in\afbfHrad$, we have
\begin{equation*}
\begin{split}
\vi_{r',r''}(\mathscr Y_{\la,m})(x_\ga h'\ot x_\dt h'')
&=\dt_{\la,\ga+\dt}x_\ga h'T_{\rho^{mr'}}\ot
x_\dt h''T_{\rho^{mr''}}\\
&=\dt_{\la,\ga+\dt}x_\ga h'
(X_1^{-1} \cdots X_{r'}^{-1})^m\ot
x_\dt h''(X_1^{-1} \cdots X_{r''}^{-1})^m\\
&=
\dt_{\la,\ga+\dt}(x_\ga h'\ot
x_\dt h'')(X_1^{-1} \cdots X_{r'}^{-1}X_{r'+1}^{-1}\cdots
X_{r'+r''}^{-1})^m\\
&=\dt_{\la,\ga+\dt}(x_\ga h'\ot
x_\dt h'')T_\rho^{mr}=\th_{\la,\la}^{\rho^{mr}}(x_\ga h'\ot x_\dt h'').
\end{split}
\end{equation*}
It follows that
$\vi_{r',r''}(\Delta_{r',r''}(\th_{\la,\la}^{\rho^{mr}}))
=\th_{\la,\la}^{\rho^{mr}}=\vi_{r',r''}(\mathscr Y_{\la,m})$ and hence
$\Delta_{r',r''}(\th_{\la,\la}^{\rho^{mr}})
=\mathscr Y_{\la,m}$.
\end{proof}

For $A\in\afThn$ let
$\ro(A)=\bigl(\sum_{j\in\mbz}a_{i,j}\bigr)_{i\in\mbz}$ and
$\co(A)=\bigl(\sum_{i\in\mbz}a_{i,j}\bigr)_{j\in\mbz}.$
By \cite[7.7(2) and 7.9]{DF14} we have the following result.

\begin{Lem}\label{zr(thA+)}
For $A\in\afThnp$ and $\la\in\afLanr$, we have
$\zr(\th_A^+)[\diag(\la)]=
\th_{A+\diag(\la-\co(A)),r}$ {if $\la-\co(A)\in\afmbnn$}, and
$\zr(\th_A^+)[\diag(\la)]=0$ \text{otherwise.}
\end{Lem}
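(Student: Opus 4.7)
The plan is to reduce the identity to a calculation on the tensor space $\bfOg^{\ot r}$ via the idempotent structure of $[\diag(\la)]$, and then upgrade from a standard-basis identity to a canonical-basis identity using the Schiffmann--Vasserot compatibility \cite{SV}.

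First, recall that $\bfOg^{\ot r}=\bigoplus_{\mu\in\afLanr}x_\mu\afbfHr$ and that the idempotent $[\diag(\la)]\in\afbfSr$ is precisely the projection onto the summand $x_\la\afbfHr$. Consequently $\zr(\th_A^+)\cdot[\diag(\la)]$ is the $\afbfHr$-linear map that vanishes on $x_\mu\afbfHr$ for $\mu\neq\la$ and coincides with $\zr(\th_A^+)$ on $x_\la\afbfHr$. The task reduces to computing $\zr(\th_A^+)(x_\la)$ and identifying the result, under the bijection $\jmath_\vtg$ from \eqref{jmath}, with a canonical-basis element of $\afbfSr$.

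Second, I would expand $\th_A^+=\ea+\sum_{A'\prec A}c_{A,A'}\mathbf{e}^{(A')}$ with $c_{A,A'}\in\up^{-1}\mbz[\up^{-1}]$, where $\ea$ is a suitable ordered product of divided powers $E_i^{(a)}$ and central elements $\sfz_s^+$ determined by $A$. Using the Hopf coproduct $\Dt$ together with the explicit action $E_i\cdot\og_s=\dt_{i+1,\bar s}\og_{s-1}$ and $\sfz_t^+\cdot\og_s=\og_{s-tn}$ given in the excerpt, one computes $\zr(\ea)(x_\la)$ directly. The combinatorics is precisely that of stacking the off-diagonal pattern $A$ on top of the diagonal profile $\la$: in the standard basis $\{[B]\mid B\in\afThnr\}$ this produces (up to a normalizing power of $\up$) the element $[A+\diag(\la-\co(A))]$, provided $\la-\co(A)\in\afmbnn$; if any component of $\la-\co(A)$ is negative then the divided power $E_i^{(a_i)}$ exhausts the available supply of $\og_s$'s in $x_\la$ and the product is zero.

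Finally, to promote the standard-basis identity to a canonical-basis identity, invoke the Schiffmann--Vasserot compatibility \cite{SV} between the canonical basis $\bfBn^{\text{ap}}$ of $\afbfUslp$ and the canonical basis $\bfBr$ of $\afbfSr$: the lower-order corrections $\sum c_{A,A'}\mathbf{e}^{(A')}$ on the source, once transported by $\zr$ and truncated by $[\diag(\la)]$, reassemble exactly into the PBW-to-canonical base change of the target element $\th_{A+\diag(\la-\co(A)),r}$, with no spurious extra summands. Equivalently, bar-invariance of $\th_A^+$ together with bar-equivariance of the operation $\zr(-)[\diag(\la)]$ forces the image to be a bar-invariant element of $\afbfSr$ with the prescribed leading term, which characterizes the canonical-basis element uniquely. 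The main obstacle is precisely this last step: verifying that multiplication by the idempotent $[\diag(\la)]$ preserves the triangularity of the change of basis between standard and canonical bases, so that the result is a single element $\th_{B,r}$ rather than a nontrivial combination. The monomial-level computation in step two is mechanical once the coproduct is unwound, but the clean single-basis-element conclusion rests on the deeper compatibility theorem and a careful tracking of the bar involution through $\zr$.
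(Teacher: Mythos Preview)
The paper does not actually prove this lemma at all: it is imported wholesale from \cite[7.7(2) and 7.9]{DF14}, so there is no in-paper argument to compare against. Your attempt is therefore a genuine independent proof sketch, and it is worth assessing on its own merits.

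The overall strategy---bar-invariance of $\th_A^+$, bar-equivariance of $\zr(-)[\diag(\la)]$, identification of the leading term, and uniqueness of the canonical basis---is the right shape, and is in fact close to how such statements are established in \cite{DF14}. However, two points need correction. First, the appeal to Schiffmann--Vasserot \cite{SV} is misplaced: that paper proves compatibility of the canonical basis $\dbfBn$ of the \emph{modified} algebra $\dbfU$ with $\bfBr$, not compatibility of $\bfBn=\{\th_A^+\mid A\in\afThnp\}\subset\dbfHap$ with $\bfBr$. The lemma here concerns \emph{all} $A\in\afThnp$, including periodic ones, so you are outside $\afbfUslp$ and must deal with the central elements $\sfz_s^+$; the relevant compatibility is not the SV theorem but the Hall-algebra/Schur-algebra correspondence developed in \cite{VV99,DF14}. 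Second, the gap you yourself flag---that right multiplication by $[\diag(\la)]$ preserves the triangularity needed for the canonical-basis characterization---is real and is exactly where the work lies. One must check that $\zr$ intertwines the bar involutions and that the partial order on $\afThnp$ used to define $\th_A^+$ is compatible, under $A\mapsto A+\diag(\la-\co(A))$, with the partial order on $\afThnr$ used to define $\th_{B,r}$. Neither of these is automatic, and without them the ``bar-invariant with prescribed leading term'' argument does not close. The cited reference \cite{DF14} supplies precisely these ingredients.
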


For $\la,\mu\in\afmbzn$ let
$\lan\la,\mu\ran=\sum_{1\leq i\leq n}\la_i\mu_i-\sum_{1\leq i\leq n}\la_i\mu_{i+1}$
for $\la,\mu\in\afmbzn$.
We now use \ref{coproduct formula} and \ref{zr(thA+)} to prove the following formula.

\begin{Coro}\label{ke3}
Assume $r=r'+r''$ with $r',r''\in\mbn$. Let $A\in\afThnp$ and $\la\in\afLa(n,r)$
with $\la-\co(A)\in\afmbnn$. Then
we have
\begin{equation*}
\begin{split}
&\qquad\Delta_{r',r''}(\th_{A+\diag(\la-\co(A)),r})\\
&=\sum_{B,C\in\afThnp,\,\bfd(A)=\bfd(B)+\bfd(C)\atop
\al\in\afLa(n,r'),\,\bt\in\afLa(n,r''),\,
\al+\bt=\la}
\sff_{A,B,C}\up^{\lan\bfd(B),\bt\ran}\th_{B+\diag(\al-\co(B)),r'}
\ot\th_{C+\diag(\bt-\co(C)),r''},
\end{split}
\end{equation*}
where $\sff_{A,B,C}$ is as given in \eqref{fABC}.
\end{Coro}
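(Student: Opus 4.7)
The strategy is to reduce the computation of $\Delta_{r',r''}$ on $\th_{A+\diag(\la-\co(A)),r}$ to two already-computable ingredients: the coproduct expansion \eqref{fABC} of $\th_A^+$ in $\dbfHa$, and the coproduct formula \ref{coproduct formula} for the idempotents $[\diag(\la)]$. The bridge between the two is \ref{zr(thA+)}, which under the hypothesis $\la-\co(A)\in\afmbnn$ provides the multiplicative factorization
$$\th_{A+\diag(\la-\co(A)),r}=\zr(\th_A^+)\cdot[\diag(\la)]$$
inside $\afbfSr$. Since $\Delta_{r',r''}$ is an algebra homomorphism, the problem then splits into computing $\Delta_{r',r''}$ on each factor separately and multiplying the answers in $\afbfSra\ot\afbfSrad$.

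For the first factor I invoke the commutative diagram of Section~2 defining $\Delta_{r',r''}$, which yields $\Delta_{r',r''}\circ\zeta_{r'+r''}=(\zeta_{r'}\ot\zeta_{r''})\circ\Delta$. Substituting \eqref{fABC} then produces
$$\Delta_{r',r''}(\zeta_{r'+r''}(\th_A^+))=\sum_{\substack{B,C\in\afThnp\\ \bfd(A)=\bfd(B)+\bfd(C)}}\sff_{A,B,C}\,\zeta_{r'}(\th_B^+)\ot\zeta_{r''}(\th_C^+\ti K^{\bfd(B)}).$$
For the second factor I identify $[\diag(\la)]$ with $\th_{\la,\la}^{\rho^0}$ via the bijection $\jmath_\vtg$ of \eqref{jmath} (the $m=0$ case), so \ref{coproduct formula} gives
$$\Delta_{r',r''}([\diag(\la)])=\sum_{\substack{\al+\bt=\la\\ \al\in\afLa(n,r'),\,\bt\in\afLa(n,r'')}}[\diag(\al)]\ot[\diag(\bt)].$$

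Multiplying these two tensor-product sums and applying \ref{zr(thA+)} once more to each tensor factor (so that $\zeta_{r'}(\th_B^+)\cdot[\diag(\al)]=\th_{B+\diag(\al-\co(B)),r'}$ when $\al-\co(B)\in\afmbnn$ and vanishes otherwise, and analogously for $C$ and $\bt$) reduces the entire proof to identifying the scalar by which $\zeta_{r''}(\ti K^{\bfd(B)})$ acts on the summand $x_\bt\afbfHr\subseteq\bfOg^{\ot r''}$. I expect this last step to be the principal obstacle, and the only non-formal point. Using $\ti K_i=K_iK_{i+1}^{-1}$ and the defining action $K_i\cdot\og_s=\up^{\delta_{i,\bar s}}\og_s$, a direct calculation on $\og_{\bfi_\bt}$ (extended to all of $x_\bt\afbfHr$ because the $\ti K_i$-action commutes with the right Hecke action) shows that $\ti K_i$ acts on $x_\bt\afbfHr$ by the scalar $\up^{\bt_i-\bt_{i+1}}$. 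Taking the product over $i\in I$ weighted by the components $\bfd(B)_i$, the element $\ti K^{\bfd(B)}$ acts on $x_\bt\afbfHr$ by exactly $\up^{\lan\bfd(B),\bt\ran}$. This produces the factor $\up^{\lan\bfd(B),\bt\ran}$ in the corollary, and assembling the pieces yields the stated identity.
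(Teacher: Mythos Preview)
Your proof is correct and follows essentially the same route as the paper's: factor $\th_{A+\diag(\la-\co(A)),r}=\zeta_r(\th_A^+)[\diag(\la)]$ via Lemma~\ref{zr(thA+)}, apply the algebra homomorphism $\Delta_{r',r''}$ to each factor using the commutative diagram and Lemma~\ref{coproduct formula}, substitute \eqref{fABC}, and then reapply Lemma~\ref{zr(thA+)} to each tensor factor. The paper compresses all of this into two lines and leaves the scalar $\up^{\lan\bfd(B),\bt\ran}$ (coming from $\zeta_{r''}(\ti K^{\bfd(B)})[\diag(\bt)]$) implicit, whereas you spell it out; your weight computation via $K_i\cdot\og_s=\up^{\delta_{i,\bar s}}\og_s$ is the right way to justify it.
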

\begin{proof}
By  \ref{zr(thA+)}  we have
$\Delta_{r',r''}(\th_{A+\diag(\la-\co(A)),r})=
\Delta_{r',r''}(\zeta_r(\th_A^+))\cdot\Delta_{r',r''}([\diag(\la)])=
((\zeta_{r'}\ot\zeta_{r''})\circ\Delta(\th_A^+))\cdot\Delta_{r',r''}([\diag(\la)])$. Now the assertion follows from \eqref{fABC} and  \ref{coproduct formula}.
\end{proof}

For $m\in\mbz$ there is a map
\begin{equation}\label{etam}
\etam:\afThn\ra\afThn
\end{equation}
defined by sending $A=(a_{i,j})_{i,j\in\mbz}$ to $(a_{i,mn+j})_{i,j\in\mbz}$.
The following lemma can be easily checked (see \cite{FS}).
\begin{Lem}\label{ke}
Let  $m\in\mbz$ and  $A\in\afThnr$ with $\la=\ro(A)$ and $\mu=\co(A)$.

$(1)$ If $a_{i,j}=0$ for $1\leq i\leq n$ and $j\leq mn$,
then $\etak(A)\in\afThnp$ for $k\leq m-1$.

$(2)$ We have $\th_{A,r}\cdot\th_{\mu,\mu}^{\rho^{mr}}=\th_{\etam(A),r}
=\th_{\la,\la}^{\rho^{mr}}
\cdot\th_{A,r}$ for $m\in\mbz$.
\end{Lem}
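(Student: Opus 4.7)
For part (1), I would verify the membership $\etak(A)\in\afThnp$ directly from the definitions. The matrix $\etak(A)$ has entry $a_{i,kn+j}$ in position $(i,j)$, so membership in $\afThnp$ amounts to the vanishing $a_{i,kn+j}=0$ whenever $i\geq j$. For $1\le i\le n$ and $i\geq j$ we have $j\leq n$, so $kn+j\leq (k+1)n\leq mn$ whenever $k\leq m-1$, and the hypothesis forces the entry to vanish. Using the periodicity $a_{i,j}=a_{i+n,j+n}$ one extends the check to arbitrary $i\in\mbz$ by translating both coordinates back into the fundamental strip.

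For part (2), I would compare both sides as $\mbq(\up)$-linear endomorphisms of $\bfOg^{\ot r}$ by evaluating on $x_\nu h$ for $\nu\in\afLanr$ and $h\in\afbfHr$. The starting point is the identity $\th_{\mu,\mu}^{\rho^{mr}}(x_\nu h)=\dt_{\mu,\nu}\,x_\mu h T_{\rho^{mr}}$ (used already in the proof of \ref{coproduct formula}), combined with \ref{Trho}, which gives $T_{\rho^{mr}}=X_1^{-m}X_2^{-m}\cdots X_r^{-m}$. The right-action of this element on a basis tensor $\og_\bfi$ shifts each coordinate $i_t$ by $mn$, which is precisely the combinatorial effect of passing from $A$ to $\etam(A)$ at the level of column labels.

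To promote this to an equality of endomorphisms, I would first observe that $\ro(\etam(A))=\ro(A)=\la$ and $\co(\etam(A))=\co(A)=\mu$ (immediate from the definitions and the periodicity of $\co(A)$), so that both $(\th_{A,r}\cdot\th_{\mu,\mu}^{\rho^{mr}})(x_\nu h)$ and $\th_{\etam(A),r}(x_\nu h)$ vanish unless $\nu=\mu$. It then suffices to verify
\begin{equation*}
\th_{A,r}(x_\mu)\cdot X_1^{-m}\cdots X_r^{-m}=\th_{\etam(A),r}(x_\mu),
\end{equation*}
which reduces, via the bijection $\jmath_\vtg$ of \eqref{jmath}, to a uniform $mn$-shift of the column labels of the tensor summands comprising $\th_{A,r}(x_\mu)$. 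The second identity $\th_{\la,\la}^{\rho^{mr}}\cdot\th_{A,r}=\th_{\etam(A),r}$ is entirely parallel, implementing the corresponding row-shift by $mn$ on the left.

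The only point requiring care will be the explicit tensor-space description of $\th_{A,r}(x_\mu)$ in enough detail to track the $X$-action cleanly; once that is in hand, matching terms with the decomposition of $\th_{\etam(A),r}(x_\mu)$ is essentially bookkeeping with Young-subgroup data, which is why the authors remark that the lemma can be checked easily (and reference \cite{FS} where such shift identities are handled systematically).
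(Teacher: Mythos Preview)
The paper gives no proof at all: it merely says the lemma ``can be easily checked (see \cite{FS})''. Your outline is therefore more detailed than anything in the paper itself, and for part~(1) it is complete and correct.

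For part~(2) your plan is sound, but one step deserves a word more than ``bookkeeping''. What your tensor-space computation immediately yields is the identity on the \emph{standard} basis, namely $[A]\cdot\th_{\mu,\mu}^{\rho^{mr}}=[\etam(A)]$ for $A$ with $\co(A)=\mu$ (and the row analogue on the left). To upgrade this to the canonical basis $\th_{A,r}$, you need to observe that right multiplication by $\th_{\mu,\mu}^{\rho^{mr}}$ is given by right multiplication by the length-zero (hence bar-fixed) central element $T_\rho^{mr}$; this induces a bijection $[A]\mapsto[\etam(A)]$ that intertwines the bar involution and is compatible with the closure order on orbits, and therefore takes $\th_{A,r}$ to $\th_{\etam(A),r}$. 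You gesture at this when you say ``matching terms\ldots is essentially bookkeeping'', but it is the one genuinely nontrivial point in part~(2) and should be stated explicitly rather than folded into the tensor-space description of $\th_{A,r}(x_\mu)$.
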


Assume $N\geq n$. There is a natural injective map
$$\ti{\,\,}:\afThn\lra\Th_\vtg(N),\quad A=(a_{i,j})\longmapsto\ti
A=(\ti a_{i,j}),$$ where $\ti A=(\ti a_{i,j})$
is defined by
\begin{equation*}\label{AtoAtilde}
\ti a_{k,l+mN}=\begin{cases} a_{k,l+mn}, &\text{ if }1\leq k,l\leq n;\\
0, &\text{ if either }n< k\leq N\text{ or }n< l\leq N\end{cases}
\end{equation*}
for  $m\in\mbz$. Note that the map $\ti{\,\,}:\afThn\lra\Th_\vtg(N)$ induces a map from $\afThnp$ to $\afTh^+(N)$.
It is easy to see that there is an injective
algebra homomorphism (not sending 1 to 1)
$$\iota_{n,N}:\afbfSr\lra\afbfSNr,\;\;[A]\longmapsto [\ti A]\;\;\text{for $A\in\afThnr$}$$
(see \cite[\S 4.1]{DDF}).
Let $\afThnrap=\afThnap\cap\afThnr$. One can easily prove the following
resutls (see \cite{FS}).
\begin{Lem}\label{prop iota}
Assume $N>n$. Then for $A\in\afThnr$ we have $\ti A\in\afThNrap$ and
$\iota_{n,N}(\th_{A,r})=\th_{\ti A,r}$.
\end{Lem}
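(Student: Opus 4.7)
The first assertion is immediate. Since $N>n$, we may fix any index $i$ with $n<i\leq N$. By the very definition of $\ti A=(\ti a_{k,l})$, the $i$-th row of $\ti A$ vanishes: $\ti a_{i,j}=0$ for every $j\in\mbz$. In particular, $\ti a_{i,i+l}=0$ for all $l\neq 0$, which verifies the aperiodicity condition. Combined with the obvious equality $\sg(\ti A)=\sg(A)=r$, this yields $\ti A\in\afThNrap$.

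For the second assertion, the plan is to apply the standard uniqueness characterization of Lusztig's canonical basis from \cite{Lu99}. The element $\th_{A,r}\in\afbfSr$ is uniquely determined by two conditions: it is bar-invariant, and it admits an expansion of the form $[A]+\sum_{B\prec A}p_{B,A}(v)[B]$ with coefficients $p_{B,A}(v)\in v^{-1}\mbz[v^{-1}]$, where $\prec$ denotes the relevant partial order on $\afThnr$. It therefore suffices to verify that $\iota_{n,N}(\th_{A,r})\in\afbfSNr$ satisfies the two corresponding conditions that characterize $\th_{\ti A,r}$.

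For the bar-invariance, one checks that $\iota_{n,N}$ intertwines the bar involutions on $\afbfSr$ and $\afbfSNr$; since the bar involution on each affine quantum Schur algebra is determined by its action on the standard basis, and both $[A]$ and $[\ti A]=\iota_{n,N}([A])$ are bar-invariant, this compatibility is built into the definition of $\iota_{n,N}$ from \cite[\S 4.1]{DDF}. For the triangularity, one only needs the embedding $A\mapsto\ti A$ to be compatible with the partial order, which is straightforward because the construction of $\ti A$ merely pads $A$ with zero rows and columns. Applying $\iota_{n,N}$ term-by-term to $\th_{A,r}=[A]+\sum_{B\prec A}p_{B,A}(v)[B]$ then produces the expansion $[\ti A]+\sum_{B\prec A}p_{B,A}(v)[\ti B]$ of the required triangular form in $\afbfSNr$. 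Once these two compatibilities are in place, the uniqueness of the canonical basis identifies $\iota_{n,N}(\th_{A,r})$ with $\th_{\ti A,r}$. The main obstacle is the verification of order-compatibility for $A\mapsto\ti A$; the remaining checks are essentially bookkeeping.
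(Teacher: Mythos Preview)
The paper does not actually prove this lemma; it merely says the result is easy and defers to \cite{FS}. Your overall strategy---check bar-invariance and triangularity of $\iota_{n,N}(\th_{A,r})$ and invoke the uniqueness characterization of Lusztig's canonical basis---is the standard one and is presumably what \cite{FS} carries out. The aperiodicity argument is correct.

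There is, however, a genuine slip in your justification of bar-compatibility. The standard basis elements $[A]$ are \emph{not} bar-invariant in general; only the canonical basis elements $\th_{A,r}$ are. So the sentence ``both $[A]$ and $[\ti A]=\iota_{n,N}([A])$ are bar-invariant'' is false, and the argument you built on it does not establish anything. What you actually need is $\overline{\iota_{n,N}(x)}=\iota_{n,N}(\overline{x})$ for all $x\in\afbfSr$. This is true, but it requires a real argument: for instance, one uses that the bar involution on $\afbfSr=\End_{\afbfHr}(\bfOg^{\ot r})$ is induced from the Kazhdan--Lusztig bar on the Hecke side, and that $\iota_{n,N}$ comes from the idempotent truncation (inclusion of $\bigoplus_{\la\in\afLanr}x_\la\afbfHr$ into $\bigoplus_{\la\in\Lambda_\vtg(N,r)}x_\la\afbfHr$), which is manifestly bar-equivariant. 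Your pointer to \cite[\S4.1]{DDF} is the right place, but the reason you gave is wrong.

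There is also a point you glossed over in the triangularity step. To identify $\iota_{n,N}(\th_{A,r})$ with $\th_{\ti A,r}$ by uniqueness, it is not enough that $B\prec A$ implies $\ti B\prec\ti A$; you must also know that every $C\in\afThNr$ with $C\prec\ti A$ is of the form $\ti B$ for some $B\in\afThnr$, so that the expansion of $\th_{\ti A,r}$ cannot contain ``extra'' terms outside the image of $\iota_{n,N}$. This holds because any such $C$ has the same row and column sums as $\ti A$, hence is supported on indices $1,\ldots,n$, but you should state it. You flagged order-compatibility as the main obstacle; in fact the bar-compatibility is the step where your write-up actually breaks.
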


We now give a precise relation between the structure constants
of the comultiplication with respect to the canonical basis $\bfBr$
of $\afbfSr$ and that with respect to the canonical basis
$\bfBNap$ of $\afbfUslNp$.

\begin{Thm}\label{prop of canonical basis for affine q-Schur algebras}
Assume $N\geq n$ and $r=r'+r''$. Let $A\in\afThnr$, $B\in\afTh(n,r')$, $C\in\afTh(n,r'')$.

$(1)$ For $X\in\afTh(N,r')$, $Y\in\afTh(N,r'')$, we have
$$\g_{\ti{\etak(A)},X,Y}^{r',r''}=
\begin{cases}
\g_{A,L,M}^{r',r''}&\text{if
$X=\ti{\etak(L)}$ and $Y=\ti{\etak(M)}$ for some $L\in\afTh(n,r')$
and $M\in\afTh(n,r'')$},\\
0&\text{otherwise}
\end{cases}$$
for $k\in\mbz$, where $\g_{A,L,M}^{r',r''}$ is as given in \eqref{gABCr}.

$(2)$ If $N>n$, then there exist $k_0\in\mbz$ such that for $k\leq k_0$, $\ti{\etak(A)},\ti{\etak(B)},\ti{\etak(C)}\in \afThNpap$ and
$$\g_{A,B,C}^{r',r''}
=\up^{\lan\bfd(\ti{\etak(B)}),\co(\ti{\etak(C)})\ran}
\sff_{\ti{\etak(A)},\ti{\etak(B)},\ti{\etak(C)}}\in\mbn[\up,\up^{-1}],$$ where
$\sff_{\ti{\etak(A)},\ti{\etak(B)},\ti{\etak(C)}}$ is as given in \eqref{fABC}.
\end{Thm}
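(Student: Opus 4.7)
The plan is to derive Part~(2) from Part~(1) via the $\afbfSNr$-analogue of Corollary~\ref{ke3}, and to establish Part~(1) by combining a shift-compatibility identity valid inside any affine quantum Schur algebra with the commutation of $\iota_{n,N}$ and $\Dt_{r',r''}$.

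\textbf{Shift-compatibility.} First I would establish that for any $A\in\afThnr$ and $k\in\mbz$,
$$\Dt_{r',r''}(\th_{\etak(A),r})=\sum_{L\in\afTh(n,r'),\,M\in\afTh(n,r'')}\g_{A,L,M}^{r',r''}\,\th_{\etak(L),r'}\ot\th_{\etak(M),r''}.$$
By Lemma~\ref{ke}(2) one has $\th_{\etak(A),r}=\th_{A,r}\cdot\th_{\co(A),\co(A)}^{\rho^{kr}}$; applying the algebra homomorphism $\Dt_{r',r''}$, expanding $\Dt_{r',r''}(\th_{\co(A),\co(A)}^{\rho^{kr}})$ via Lemma~\ref{coproduct formula}, and simplifying each $\th_{L,r'}\cdot\th_{\al,\al}^{\rho^{kr'}}$ by Lemma~\ref{ke}(2) (which vanishes unless $\al=\co(L)$, in which case it equals $\th_{\etak(L),r'}$) produces the displayed identity. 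An entirely parallel computation in $\afbfSNr$, together with the elementary identity $\etak(\ti A)=\ti{\etak(A)}$ in $\afTh(N)$ (checked directly from the definitions, using $\ti A_{i,l+mN}=0$ for $n<l\leq N$), yields
$$\Dt_{r',r''}(\th_{\ti{\etak(A)},r})=\sum_{L'\in\afTh(N,r'),\,M'\in\afTh(N,r'')}\g_{\ti A,L',M'}^{r',r''}\,\th_{\etak(L'),r'}\ot\th_{\etak(M'),r''}.$$

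\textbf{Part~(1).} Next I would invoke the commutation $(\iota_{n,N}\ot\iota_{n,N})\circ\Dt_{r',r''}=\Dt_{r',r''}\circ\iota_{n,N}$ combined with Lemma~\ref{prop iota}: this forces $\g_{\ti A,L',M'}^{r',r''}$ to vanish unless $L'=\ti L$, $M'=\ti M$ lie in the image of the tilde embedding, in which case it equals $\g_{A,L,M}^{r',r''}$. Substituting into the second expansion above and using the bijectivity of $\etak$ on $\afTh(N)$ to reindex (so $\etak(L')=\etak(\ti L)=\ti{\etak(L)}$) yields Part~(1) in both its positive and vanishing branches.

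\textbf{Part~(2).} Choose $k_0\in\mbz$ small enough so that $\etak(A),\etak(B),\etak(C)\in\afThnp$ for all $k\leq k_0$ (possible by Lemma~\ref{ke}(1)); for $N>n$, Lemma~\ref{prop iota} then places $\ti{\etak(A)},\ti{\etak(B)},\ti{\etak(C)}$ in $\afThNpap$. Apply the $\afbfSNr$-analogue of Corollary~\ref{ke3} to the strict upper triangular matrix $\ti{\etak(A)}$ with $\la':=\co(\ti{\etak(A)})$ (so the diagonal correction $\la'-\co(\ti{\etak(A)})$ vanishes), expanding $\Dt_{r',r''}(\th_{\ti{\etak(A)},r})$ as a sum over $B',C'\in\Theta_\vtg^+(N)$ satisfying $\bfd(B')+\bfd(C')=\bfd(\ti{\etak(A)})$ and $\al'+\bt'=\co(\ti{\etak(A)})$. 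Since $\ti{\etak(B)}$ is strict upper triangular with zero diagonal, the coefficient of $\th_{\ti{\etak(B)},r'}\ot\th_{\ti{\etak(C)},r''}$ is forced by the unique decomposition $B'=\ti{\etak(B)}$, $\al'=\co(\ti{\etak(B)})$ (and similarly for $C'$), giving
$$\sff_{\ti{\etak(A)},\ti{\etak(B)},\ti{\etak(C)}}\,\up^{\lan\bfd(\ti{\etak(B)}),\co(\ti{\etak(C)})\ran}.$$
By definition this coefficient is $\g_{\ti{\etak(A)},\ti{\etak(B)},\ti{\etak(C)}}^{r',r''}$, which by Part~(1) equals $\g_{A,B,C}^{r',r''}$; positivity in $\mbn[\up,\up^{-1}]$ then follows from Theorem~\ref{positive affine sln} applied to the aperiodic triple.

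\textbf{Main obstacle.} The delicate step is the tilde-comultiplication compatibility used in Part~(1), namely $(\iota_{n,N}\ot\iota_{n,N})\circ\Dt_{r',r''}=\Dt_{r',r''}\circ\iota_{n,N}$: although $\iota_{n,N}$ is known to be an algebra homomorphism defined combinatorially on the canonical basis by $[A]\mapsto[\ti A]$, its compatibility with the comultiplication requires independent verification. A natural route is to check it on a generating set described via $\zeta_r$, exploiting the Hopf structure of $\dbfHa$ and the commutative diagram from Section~2 relating $\zeta_{r'+r''}$, $\zeta_{r'}\ot\zeta_{r''}$, and $\vi_{r',r''}$.
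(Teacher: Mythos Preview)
Your proposal is correct and follows essentially the same route as the paper: the shift-compatibility identity via Lemma~\ref{ke}(2) and Lemma~\ref{coproduct formula}, then the commutation of $\iota_{n,N}$ with $\Dt_{r',r''}$ together with Lemma~\ref{prop iota} to get Part~(1), and finally Corollary~\ref{ke3} at level $N$, Lemma~\ref{ke}(1), and Theorem~\ref{positive affine sln} for Part~(2). The only organizational difference is that you run the shift identity a second time inside $\afbfSNr$ and then deduce $\g_{\ti A,L',M'}^{r',r''}$ from the commutation applied to $\th_{A,r}$, whereas the paper applies the commutation directly to $\th_{\etak(A),r}$ in one step; and the compatibility $(\iota_{n,N}\ot\iota_{n,N})\circ\Dt_{r',r''}=\Dt_{r',r''}\circ\iota_{n,N}$, which you flag as the main obstacle, is asserted in the paper without further comment.
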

\begin{proof}
Let $\mu=\co(A)$. Then by \ref{ke}(2) and \ref{coproduct formula} we have
\[
\Delta_{r',r''}(\th_{\etak(A),r}) =
\Delta_{r',r''}(\th_{A,r})\Delta_{r',r''}(\th_{\mu,\mu}^{\rho^{kr}})=
\sum_{X\in\afTh(n,r')\atop Y\in\afTh(n,r'')}g_{A,X,Y}^{r',r''}
\th_{\etak(X),r'}\ot\th_{\etak(Y),r''}.
\]
Clearly we have $\Delta_{r',r''}\circ\iota_{n,N}=(\iota_{n,N}\ot
\iota_{n,N})\circ\Delta_{r',r''}$. Thus by  \ref{prop iota} we have
$$\Delta_{r',r''}(\th_{\ti{\etak(A)},r})
=(\iota_{n,N}\ot
\iota_{n,N})(\Delta_{r',r''}(\th_{{\etak(A)},r}))
=
\sum_{X\in\afTh(n,r')\atop Y\in\afTh(n,r'')}g_{A,X,Y}^{r',r''}
\th_{\ti{\etak(X)},r'}\ot\th_{\ti{\etak(Y)},r''}. $$
The statement (1) follows. The statement (2) follows from (1),
\ref{positive affine sln}, \ref{ke3} and \ref{ke}(1).
\end{proof}

\begin{Coro}\label{relation bfBn bfBNap}
Assume $N> n$. For $A,B,C\in\afThnp$ we have
$$\sff_{A,B,C}=\up^{-\lan\bfd(B),\co(C)\ran+\lan
\bfd(\ti B),\co(\ti C)\ran}\sff_{\ti A,\ti B,\ti C}\in\mbn[\up,\up^{-1}],$$ where $\sff_{A,B,C}$ is as given in \eqref{fABC}.
\end{Coro}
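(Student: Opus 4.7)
The plan is to express both $\sff_{A,B,C}$ and $\sff_{\ti A,\ti B,\ti C}$ as specializations of a common $\g$-coefficient in an affine quantum Schur algebra, via two applications of \ref{ke3}: one at the $n$-level and the other at the $N$-level. We may assume $\sff_{A,B,C}\ne 0$, since otherwise the derivation below will also yield $\sff_{\ti A,\ti B,\ti C}=0$. Under this assumption, the Hall-algebra / short-exact-sequence interpretation of the comultiplication yields $\co(B)+\co(C)\ge\co(A)$ coordinate-wise. Set $\al:=\co(B)$, $\bt:=\co(C)$, $\la:=\al+\bt$, $r':=\sg(\al)$, $r'':=\sg(\bt)$, $r:=r'+r''$, and $A':=A+\diag(\la-\co(A))\in\afThnr$. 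Since $\al-\co(B)=0$ and $\bt-\co(C)=0$, the formula of \ref{ke3} collapses to
\[
\g_{A',B,C}^{r',r''}\;=\;\sff_{A,B,C}\,\up^{\lan\bfd(B),\co(C)\ran}\qquad(\ast)
\]

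Next, I would apply the $N$-level analog of \ref{ke3} to $\ti A,\ti B,\ti C\in\afThNp$, with parameters $\ti\la,\ti\al,\ti\bt$ (the zero extensions to $\mbz^N$). The key technical ingredient is that the embedding $\ti{\,\cdot\,}\colon\afThn\to\afTh(N)$ is $\mbz$-linear and commutes with the operations $\co$ and $\diag$, in the sense that $\co(\ti X)=\ti{\co(X)}$ and $\diag(\ti\la)=\ti{\diag(\la)}$. Consequently $\ti{A'}=\ti A+\diag(\ti\la-\co(\ti A))$, and the analogous diagonal shifts on $\ti B$ and $\ti C$ again vanish, giving
\[
\g_{\ti{A'},\ti B,\ti C}^{r',r''}\;=\;\sff_{\ti A,\ti B,\ti C}\,\up^{\lan\bfd(\ti B),\co(\ti C)\ran}\qquad(\ast\ast)
\]
These two $\g$-coefficients coincide: the commutativity $\Delta_{r',r''}\circ\iota_{n,N}=(\iota_{n,N}\ot\iota_{n,N})\circ\Delta_{r',r''}$, together with \ref{prop iota} (which gives $\iota_{n,N}(\th_{X,s})=\th_{\ti X,s}$), forces $\g_{\ti{A'},\ti B,\ti C}^{r',r''}=\g_{A',B,C}^{r',r''}$. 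Equating $(\ast)$ and $(\ast\ast)$ and rearranging then yields the asserted identity.

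For the positivity, observe that $\ti X\in\afThNpap$ whenever $X\in\afThnp$ and $N>n$: the embedding forces $\ti x_{n+1,j}=0$ for every $j$, so in particular $\ti x_{n+1,n+1+l}=0$ for every integer $l\ne 0$, witnessing aperiodicity. Hence $\sff_{\ti A,\ti B,\ti C}\in\mbn[\up,\up^{-1}]$ by \ref{positive affine sln}, and multiplication by the monomial $\up^{-\lan\bfd(B),\co(C)\ran+\lan\bfd(\ti B),\co(\ti C)\ran}$ preserves membership in $\mbn[\up,\up^{-1}]$. The main technical obstacle is the verification of the embedding compatibilities $\co(\ti X)=\ti{\co(X)}$ and $\diag(\ti\la)=\ti{\diag(\la)}$, which requires a short but delicate computation using the $N$-periodicity of $\ti X$ to sum across column orbits and identify them with column sums of $X$. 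A secondary subtle point is the inequality $\co(B)+\co(C)\ge\co(A)$ when $\sff_{A,B,C}\ne 0$, which follows from the standard SES-splitting description of the Ringel--Hall comultiplication: each internal split of an indecomposable summand of $M_A$ at a position $i$ contributes the endpoint $i\bmod n$ to $\co(B)+\co(C)$.
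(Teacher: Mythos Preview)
Your overall strategy is the same as the paper's: realize $\sff_{A,B,C}$ as a $\g$-coefficient in some $\afbfSr$ via \ref{ke3}, pass through $\iota_{n,N}$, and read off $\sff_{\ti A,\ti B,\ti C}$ by a second application of \ref{ke3} at the $N$-level. The paper, however, avoids your case split and the inequality $\co(B)+\co(C)\ge\co(A)$ entirely: it simply picks \emph{any} $\bfx,\bfy\in\afmbnn$ with $\bfx+\co(A)=\bfy+\co(B)+\co(C)$ (such $\bfx,\bfy$ always exist), sets $r'=\sg(B)+\sg(\bfy)$, $r''=\sg(C)$, and works with $A+\diag(\bfx)$, $B+\diag(\bfy)$, $C$. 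With this choice \ref{ke3} applies unconditionally, and the chain
\[
\sff_{A,B,C}=\up^{-\lan\bfd(B),\co(C)\ran}\g_{A+\diag(\bfx),B+\diag(\bfy),C}^{r',r''}
=\up^{-\lan\bfd(B),\co(C)\ran}\g_{\ti A+\ti{\diag(\bfx)},\ti B+\ti{\diag(\bfy)},\ti C}^{r',r''}
\]
holds whether or not $\sff_{A,B,C}=0$.

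Your route introduces an obligation that you do not actually discharge within the paper's framework. The ``SES-splitting'' justification you offer for $\co(B)+\co(C)\ge\co(A)$ is an argument about Green's comultiplication on the PBW basis $\{u_A\}$; but $\sff_{A,B,C}$ is a structure constant for the \emph{canonical} basis $\{\th_A^+\}$, and the upper-triangular transition between the two bases does not obviously transport column-sum inequalities. Even granting the inequality when $\sff_{A,B,C}\ne0$, the case $\sff_{A,B,C}=0$ is not covered: if the inequality fails there, \ref{ke3} does not apply with your choice of $\la$, so the sentence ``the derivation below will also yield $\sff_{\ti A,\ti B,\ti C}=0$'' is unsupported. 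The paper's device of auxiliary $\bfx,\bfy$ eliminates all of this at no cost. As a minor point, the identity $\g_{\ti{A'},\ti B,\ti C}^{r',r''}=\g_{A',B,C}^{r',r''}$ that you obtain from the commutation of $\Delta_{r',r''}$ with $\iota_{n,N}$ is precisely the $k=0$ case of \ref{prop of canonical basis for affine q-Schur algebras}(1), which the paper cites directly rather than re-deriving.
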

\begin{proof}
There exist $\bfx,\bfy\in\afmbnn$ such that
$\bfx+\co(A)=\bfy+\co(B)+\co(C)$. Let
$r'=\sg(B)+\sg(\bfy)$ and $r''=\sg(C)$.
By \ref{ke3}, \ref{prop of canonical basis for affine q-Schur algebras}(1)
and \ref{positive affine sln} we have
$\sff_{A,B,C}=\up^{-\lan\bfd(B),\co(C)\ran}
\g_{A+\diag(\bfx),B+\diag(\bfy),C}^{r',r''}=
\up^{-\lan\bfd(B),\co(C)\ran}
\g_{\ti A+\ti{\diag(\bfx)},\ti B+\ti{\diag(\bfy)},\ti C}^{r',r''}
=\up^{-\lan\bfd(B),\co(C)\ran+\lan
\bfd(\ti B),\co(\ti C)\ran}\sff_{\ti A,\ti B,\ti C}
\in\mbn[\up,\up^{-1}].$
\end{proof}

\section{The connection between $\dbfBn$ and $\bfBNap$}

Let $X$ be the quotient of $\afmbzn$ by the subgroup generated by
the element ${\bf 1}$, where ${\bf 1}_i=1$ for all $i$.
For $\la\in\afmbzn$ let $\bar\la\in X$ be the image of $\la$ in $X$. Let $Y=\{\mu\in\afmbzn\mid\sum_{1\leq i\leq n}\mu_i=0\}$. For $\bar\la\in X$ and $\mu\in Y$ we set $ \mu\cdot \bar\la=\sum_{1\leq
i\leq n}\la_i\mu_i$.
For $\bar\la,\bar\mu\in X$ we set
${}_{\bar\la}\bfU(\afsl)_{\bar\mu}=\bfU(\afsl)/{}_{\bar\la}
I_{\bar\mu},$
where ${}_{\bar\la}I_{\bar\mu}=
\sum_{\bfj\in Y}(\Kbfj-
 \up^{\bfj\cdot\bar\la})\bfU(\afsl)+\sum_{\bfj\in Y}\bfU(\afsl)(\Kbfj
 -\up^{\bfj\cdot\bar\mu}) $
and $K^{\bfj}=\prod_{1\leq i\leq n}K_i^{j_i}$.
Let  $$\dbfU:=\bop\limits_{\bar\la,\bar\mu\in X}
{}_{\bar \la}\bfU(\afsl)_{\bar\mu}.$$
There is a natural algebra structure on $\dbfU$ inherited from that of $\bfU(\afsl)$ (see \cite[23.1.1]{Lubk}).

Let $\pi_{\bar\la,\bar\mu}:\bfU(\afsl)\ra
{}_{\bar\la}\bfU(\afsl)_{\bar\mu}$ be the canonical projection.
For $\bar\la\in X$ let $1_{\bar\la}=\pi_{\bar\la,\bar\la}(1)$.
The map $\zr$ defined in \S2 induces an
algebra homomorphism
$$\dzr:\dbfU\ra\afbfSr$$ such that
$\dzr(\pi_{\bar\la,\bar\mu}(u))=\dzr(1_{\bar\la})
\zeta_r(u)\dzr(1_{\bar\mu})$ for $u\in\bfU(\afsl)$ and
$\bar\la,\bar\mu\in X$, and
$\dzr(1_{\bar\la})=
[\diag(\al)]$ \text{if $\bar\la=\bar\al$ for some $\al\in\afLanr$,}
$\dzr(1_{\bar\la})=0$ otherwise.

For $\bar\al,\bar\bt,\bar\ga,\bar\dt\in X$, there is a well-defined linear map
$$\Delta_{\bar\al,\bar\bt,\bar\ga,\bar\dt}:{}_{\bar\al+\bar\ga}
\bfU(\afsl)_{\bar\bt+\bar\dt}\ra
{}_{\bar\al}
\bfU(\afsl)_{\bar\bt}\ot
{}_{\bar\ga}
\bfU(\afsl)_{\bar\dt}
$$
such that
$\Delta_{\bar\al,\bar\bt,\bar\ga,\bar\dt}(\pi_{\bar\al+\bar\ga,\bar\bt+\bar\dt}(x))
=(\pi_{\bar\al,\bar\bt}\ot\pi_{\bar\ga,\bar\dt})(\Delta(x))$
for $x\in\bfU(\afsl)$. This collection of maps is called the comultiplication of  $\dbfU$ (see \cite{Lubk}).

Let $\dbfBn$ be the canonical basis of $\dbfU$ defined by Lusztig \cite{Lubk}.
Let $\sY(n)=\{A\in\afThnap,\,A-E\not\in\afThn\}$.
By the proof of \cite[4.3]{Lu00}, we see that for $A\in\sY(n)$, there exists a unique $\fb_A\in\dbfBn$ such that
$\dzr(\fb_A)=\th_{A+mE,r}$ if $r=\sg(A)+mn$ for some $m\geq 0$, and
$\dzr(\fb_A)=0$ otherwise. Furthermore we have
$\dbfBn=\{\fb_A\mid A\in\sY(n)\}$ (cf.  \cite{SV,Mcgerty}).

For $\la,\mu\in\afLanr$ let ${}_\la\afThnr_\mu=\{A\in\afThnr\mid
\ro(A)=\la,\,\co(A)=\mu\}$.
For $\bar\la,\bar\mu\in X$ let ${}_{\bar\la}\sY(n)_{\bar\mu}=\{
A\in\sY(n)\mid \ol{\ro(A)}=\bar\la,\,\ol{\co(A)}=\bar\mu\}$.
Then for $A\in{}_{\bar\la}\sY(n)_{\bar\mu}$ we have $\fb_A\in{}_{\bar\la}\bfU(\afsl)_{\bar\mu}$.
For $\bar\al,\bar\bt,\bar\ga,\bar\dt\in X$ and $A\in{}_{\bar\al+\bar\ga}\sY(n)_{\bar\bt+\bar\dt}$,
we write
\begin{equation}\label{hABC}
\Delta_{\bar\al,\bar\bt,\bar\ga,\bar\dt}(\fb_A)=
\sum_{B\in {}_{\bar\al}\sY(n)_{\bar\bt}\atop
C\in {}_{\bar\ga}\sY(n)_{\bar\dt}}\sfh_{A,B,C}\fb_B\ot\fb_C
\end{equation}
where $\sfh_{A,B,C}\in\sZ$.

Finally, we give a precise relation between the structure constants
of the comultiplication with respect to the canonical basis $\dbfBn$
of $\dbfU$ and that with respect to the canonical basis
$\bfBNap$ of $\afbfUslNp$.

\begin{Thm}\label{relation dbfBn bfBNap}
Let $\bar\al,\bar\bt,\bar\ga,\bar\dt\in X$, $A\in{}_{\bar\al+\bar\ga}\sY(n)_{\bar\bt+\bar\dt}$.
$B\in {}_{\bar\al}\sY(n)_{\bar\bt}$ and $C\in {}_{\bar\ga}\sY(n)_{\bar\dt}$.
Let $r'=\sg(B)$, $r''=\sg(C)$.
Assume $N>n$ and $\sfh_{A,B,C}\not=0$. Then there exist $m\in\mbn$ and
$k_0\in\mbz$ such that
$\sg(B)+\sg(C)=\sg(A)+mn$, $\ti{A_k},\ti{B_k},\ti{C_k}\in \afThNpap$ and
$$\sfh_{A,B,C}=\g_{A+mE,B,C}^{r',r''}=\up^{\lan\bfd(\ti{B_k}),
\co(\ti{C_k})\ran}\sff_{\ti{A_k},\ti{B_k},\ti{C_k}}\in\mbn[\up,\up^{-1}],$$
for $k\leq k_0$,
 where $A_k=\etak(A+mE)$, $B_k=\etak(B)$, $C_k=\etak(C)$,
$\sff_{\ti{A_k},\ti{B_k},\ti{C_k}}$ is as given in \eqref{fABC} and
$\g_{A+mE,B,C}^{r',r''}$ is as given in \eqref{gABCr}.
\end{Thm}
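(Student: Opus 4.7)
The strategy is to pass the question through the affine quantum Schur algebra $\afbfSr$ via the homomorphism $\dzr$, where Theorem~\ref{prop of canonical basis for affine q-Schur algebras}(2) is already available, and then to invoke Theorem~\ref{positive affine sln} for the positivity claim. The starting point is the naturality square
\[
(\dzra\otimes\dzrad)\circ\Delta_{\bar\al,\bar\bt,\bar\ga,\bar\dt}
=\Delta_{r',r''}\circ\dot{\zeta}_{r'+r''},
\]
which follows formally from the commutative diagram involving $\dbfHa$ recalled in \S 2 (so that the Hopf comultiplication on $\dbfHa$ intertwines $\zeta_{r'+r''}$ with $\zeta_{r'}\otimes\zeta_{r''}$) together with the componentwise definitions of both $\Delta_{\bar\al,\bar\bt,\bar\ga,\bar\dt}$ and $\dzr$ in terms of the weight projections $\pi_{\bar\la,\bar\mu}$.

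Granting this, I would set $m:=(\sg(B)+\sg(C)-\sg(A))/n$. This is an integer because the membership conditions force $\ol{\ro(A)}=\ol{\ro(B)}+\ol{\ro(C)}$ in $X$, hence $\sg(A)\equiv\sg(B)+\sg(C)\pmod n$. I then claim $m\geq 0$: otherwise $r:=r'+r''<\sg(A)$, so $\dot\zeta_{r'+r''}(\fb_A)=0$ by the description of $\fb_A$ recalled before \eqref{hABC}, and the naturality square gives
\[
0=\Delta_{r',r''}(\dot\zeta_{r'+r''}(\fb_A))
=\sum_{B',C'}\sfh_{A,B',C'}\,\dzra(\fb_{B'})\otimes\dzrad(\fb_{C'}).
\]
Each nonzero summand on the right is a scalar multiple of $\th_{B'+m'_{B'}E,r'}\otimes\th_{C'+m'_{C'}E,r''}$, and the map $B'\mapsto B'+m'_{B'}E$ (for fixed $r'$) is injective on $\sY(n)$: two elements of $\sY(n)$ cannot differ by a nonzero multiple of $E$ because each has a vanishing diagonal entry. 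Linear independence of the canonical basis of $\afbfSra\otimes\afbfSrad$ would then force $\sfh_{A,B,C}=0$, a contradiction. With $m\geq 0$ now in hand, one has $\dot\zeta_{r'+r''}(\fb_A)=\th_{A+mE,r}$, $\dzra(\fb_B)=\th_{B,r'}$, $\dzrad(\fb_C)=\th_{C,r''}$, and reading off the coefficient of $\th_{B,r'}\otimes\th_{C,r''}$ on both sides of the naturality identity applied to $\fb_A$ yields the first equality $\sfh_{A,B,C}=\g_{A+mE,B,C}^{r',r''}$.

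The second equality, the memberships $\ti{A_k},\ti{B_k},\ti{C_k}\in\afThNpap$, and the positivity $\sff_{\ti{A_k},\ti{B_k},\ti{C_k}}\in\mbn[\up,\up^{-1}]$ are then exactly Theorem~\ref{prop of canonical basis for affine q-Schur algebras}(2) applied to the triple $(A+mE,B,C)$ (with $\etak$ producing $A_k,B_k,C_k$), combined with Theorem~\ref{positive affine sln}. The main obstacle I expect is making the naturality square rigorous: one must track both the grading by $r$ and the weight-class decomposition of $\dbfU$ simultaneously, and verify that the projections $\pi_{\bar\la,\bar\mu}$ commute properly with the passage through $\zeta_r$ (in particular, the identification $\dot\zeta_{r'+r''}(\fb_A)=\th_{A+mE,r}$ for the single $m\geq 0$ dictated by $r$). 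Everything else is coefficient extraction combined with previously established results.
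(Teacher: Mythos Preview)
Your proposal is correct and follows essentially the same route as the paper: pass through $\dzr$ to the affine quantum Schur algebra, identify $\sfh_{A,B,C}$ with $\g_{A+mE,B,C}^{r',r''}$ by coefficient comparison, and then invoke Theorem~\ref{prop of canonical basis for affine q-Schur algebras}(2). The only refinement needed is exactly the one you flag: the ``naturality square'' is not literally an equality but rather
\[
(\dzra\otimes\dzrad)\circ\Delta_{\bar\al,\bar\bt,\bar\ga,\bar\dt}(\fb_A)
=\bigl([\diag(\ro(B))]\otimes[\diag(\ro(C))]\bigr)\cdot\Delta_{r',r''}(\dzr(\fb_A))\cdot\bigl([\diag(\co(B))]\otimes[\diag(\co(C))]\bigr),
\]
i.e.\ the right-hand side must be sandwiched by the weight idempotents; once this is written down, your coefficient extraction goes through verbatim, and your justification of $m\geq 0$ (via injectivity of $B'\mapsto B'+m'_{B'}E$ on $\sY(n)$) is in fact more explicit than the paper's.
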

\begin{proof}
Note that we have $\dzra(\fb_B)=\th_{B,r'}$ and $\dzrad(\fb_C)=\th_{C,r''}$.
Let $r=r'+r''$.
Since $\dzr(\fb_A)\not=0$, we conclude that
$r=\sg(A)+mn$ for some $m\in\mbn$ and $\dzr(\fb_A)=\th_{A+mE,r}$.
Clearly we have $(\dzra\ot\dzrad)\circ\Delta_{\bar\al,\bar\bt,\bar\ga,\bar\dt}
(\fb_A)=
([\diag(\ro(B))]\ot[\diag(\ro(C)) ])\cdot\Delta_{r',r''}
(\dzr(\fb_A))\cdot([\diag(\co(B))]\ot[\diag(\co(C))])$.
This together with \ref{gABCr} and \ref{hABC} implies that
\begin{equation*}
\begin{split}
&\sum_{A'\in {}_{\bar\al}\sY(n)_{\bar\bt}\atop
A''\in {}_{\bar\ga}\sY(n)_{\bar\dt}}\sfh_{A,A',A''}
\dzra(\fb_{A'})\ot\dzrad(\fb_{A''})
=\sum_{A'\in{}_{\ro(B)}\afThnra_{\co(B)}\atop
A''\in{}_{\ro(C)}\afThnrad_{\co(C)}}\g_{A+mE,A',A''}^{r',r''}
\th_{A',r'}\ot\th_{A'',r''}.
\end{split}
\end{equation*}
It follows that
$\sfh_{A,B,C}=\g_{A+mE,B,C}^{r',r''}$. Now the result follows from
\ref{prop of canonical basis for affine q-Schur algebras}(2).
\end{proof}

\end{document}